\def\dd{{\rm d}}
\def\ii{{\rm i}}
\def\ee{{\rm e}}
\def\mb{\mathbb}
\def\lf{\lfloor}
\def\rf{\rfloor}
\numberwithin{equation}{section}
\newtheorem{theorem}{Theorem}
\newtheorem{lemma}[theorem]{Lemma}
\newtheorem{proposition}[theorem]{Proposition}
\newtheorem{assumption}{Assumption}
\theoremstyle{definition}
\theoremstyle{remark}
\newtheorem{remark}{Remark}
\journal{Journal of Econometrics}
\begin{document}

\begin{frontmatter}



\title{Long memory score-driven models as approximations for rough Ornstein-Uhlenbeck processes\tnoteref{label2}} 
\tnotetext[label2]{This work is partially supported by the National Natural Science Foundation of China (Grant No. 12371144) and Fundamental Research Funds for the Central Universities (Grant No. CXJJ-2024-439).}

\author[label1]{Yinhao Wu\corref{cor1}}
\cortext[cor1]{Corresponding author}
\ead{yinhaowu@stu.sufe.edu.cn}
\author[label1]{Ping He}
\ead{pinghe@mail.shufe.edu.cn}
\affiliation[label1]{organization={School of Mathematics, Shanghai University of Finance and Economics},
            city={Shanghai},
            postcode={200433}, 
            country={China}}

\begin{abstract}
This paper investigates the continuous-time limit of score-driven models with long memory. By extending score-driven models to incorporate infinite-lag structures with coefficients exhibiting heavy-tailed decay, we establish their weak convergence, under appropriate scaling, to fractional Ornstein-Uhlenbeck processes with Hurst parameter $H < 1/2$. When score-driven models are used to characterize the dynamics of volatility, they serve as discrete-time approximations for rough volatility. We present several examples, including EGARCH($\infty$) whose limits give rise to a new class of rough volatility models. Building on this framework, we carry out numerical simulations and option pricing analyses, offering new tools for rough volatility modeling and simulation.
\end{abstract}

\begin{keyword}
Score-driven models \sep Rough volatility \sep Weak convergence \sep Option pricing

\MSC[2020] 60F17 \sep 60G22 \sep 62M10 \sep 91B70

\end{keyword}

\end{frontmatter}

\section{Introduction}
Score-driven models, also known as generalized autoregressive score models, were proposed by \cite{creal2013generalized} and \cite{harvey2013dynamic} as a class of observation-driven time series models in which parameter updates are driven by the {\it score}-the gradient of the log-likelihood function. By directly linking the score of the data distribution to parameter dynamics in a unified framework, these models inherit flexibility and generality, encompassing many classical models such as GARCH \citep{bollerslev1986generalized} and ACD \citep{engle1998autoregressive} models. Theoretically, score-driven models have been proven optimal in minimizing the Kullback-Leibler divergence between the true distribution and the postulated distribution \citep{gorgi2023optimality}. Moreover, in statistical inference, the model's closed-form likelihood function facilitates efficient maximum likelihood estimation, exhibiting lower computational costs compared to parameter-driven models. In application, they also demonstrate robust empirical performance across diverse fields such as economics, finance and biology. Within just a few years, the literature on SD models has grown to nearly 400 and can be found online at \href{http://www.gasmodel.com/gaspapers.htm}{\texttt{gasmodel.com}}.

As a remarkably influential model in time series analysis, one naturally raises the question: what would its continuous-time counterpart look like? Building on the method of \cite{nelson1990arch} for deriving the continuous-time limit of GARCH, \cite{buccheri2021continuous} have investigated continuous-time limit of score-driven volatility models, obtaining a two-dimensional diffusion process. Similarly, \cite{wu2024continuous} investigated the continuous-time limit of the quasi score-driven volatility models proposed by \cite{blasques2023quasi}, and derived a correlated stochastic volatility model. However, the score-driven models they studied only incorporate first-order or finite-order lags, which are all Markovian. In fact, when the model was first introduced in \cite{creal2013generalized}, the question was raised as to whether it could be extended to an infinite-lag form. \cite{janus2014long} and \cite{opschoor2019fractional} pursued such extensions by proposing long memory score-driven models. This naturally leads to the question: What would be the continuous-time counterpart of such a long memory version?

On the other hand, since \cite{gatheral2018volatility} discovered that volatility is rough, rough volatility models have gained popularity in financial modeling. These models are characterized by the use of fractional Brownian motion with a Hurst parameter $H<1/2$, whose paths are rougher than those of Brownian motion, hence the name. \cite{el2018microstructural} investigated the microstructure of rough volatility by employing Hawkes processes to model the arrival dynamics in limit order books. This theoretical basis originates from the discovery of \cite{jaisson2016rough} that the scaling limit of nearly unstable heavy-tailed Hawkes processes is a rough Cox-Ingersoll-Ross (CIR) process. It is worth noting that Hawkes processes are self-exciting (similar to observation-driven mechanisms) and exhibit long memory, which provides inspiration for us. In particular, \cite{cai2024scaling} studied the INAR($\infty$) process, a discrete-time counterpart of the Hawkes process, and obtained similar convergence results. This motivates our investigation into the scaling limit of score-driven models when extended to incorporate long memory.

This paper establishes that a sequence of long memory score-driven models, under appropriate scaling, converges weakly to a rough Ornstein-Uhlenbeck (OU) process. By modeling time-varying parameters as log-volatility (as in EGARCH-type models), we obtain the limit a class of rough volatility models. Consequently, long memory score-driven volatility models can serve as discrete-time approximations for such rough volatility models, facilitating efficient Monte Carlo simulations for financial applications. To the best of our knowledge, apart from \cite{wang2025rough}'s approach using INAR($\infty$) processes to approximate the rough Heston model, this study presents one of the few methodologies employing time series models to approximate rough volatility processes. Moreover, we present specific examples of long memory score-driven volatility models to further demonstrate the practical versatility and broad applicability of our theoretical results. 

The remainder of the paper is structured as follows. Section \ref{sec:long} introduces the long memory score-driven models that we focus on and provides a heuristic derivation of their continuous-time limits. In Section \ref{sec:main results}, we present our main convergence results. Proofs of the theorems are given in the main text when brief; all remaining proofs are deferred to the \ref{A}. Section \ref{sec:examples} applies these results to volatility modeling, illustrating with the Gamma-GED-EGARCH($\infty$) model and its extension, with the related proofs provided in \ref{B}. We conducts corresponding approximations of rough volatility models and option pricing. Section \ref{sec:conclusion} concludes our findings and provides an outlook for future research.

\section{Long memory score-driven models}\label{sec:long}
Since \citet{engle1982autoregressive} discovered that economic data exhibit conditional heteroskedasticity, it has gradually been recognized that certain characteristics of time series are not immutable. Specifically, let $\{y_n\}_{n\in \mb{N}}$ be a time series and $\mathcal{F}_{n}=\sigma(y_n, y_{n-1},\ldots, y_0)$ be the $\sigma$-algebra it generates. The conditional density
\[y_n|\mathcal{F}_{n-1}\sim p(y_n|\lambda_n, \mathcal{F}_{n-1}),\]
with a time-varying parameter $\lambda_n$. To address this issue, \citet{cox1981statistical} proposed two modeling approaches: observation-driven models, where the dynamics of the time-varying parameters are governed by past observations of the series itself, and parameter-driven models, in which the parameter dynamics are driven by another stochastic process. 

The score-driven model is a class of observation-driven models whose key idea is to update time-varying parameters based on the score of the observation. Specifically, the parameter update follows the form: 
\begin{equation}\label{lambda1}
    \lambda_{n+1}=\omega+\beta\lambda_{n}+\alpha S(\lambda_n)\nabla_{n},
\end{equation}
where $\nabla_{n}=\frac{\partial \log p(y_n|\lambda_n, \mathcal{F}{n-1})}{\partial \lambda_n}$ denotes the partial derivative of the log-density function of observations with respect to the parameter, commonly referred to as the (Fisher) score in statistics. Consequently, these models are aptly termed score-driven models. The function $S(\cdot)$ is known as the scaling function, which accounts for the curvature of the log-density function. It is typically chosen as the negative exponent of the conditional Fisher information, i.e., $S(\lambda_n)=[\mb{E}(\nabla_n^2|\mathcal{F}_{n-1})]^{-a}$, with common choices for $a$ being $0$, $1/2$, or $1$. To illustrate, when the density function $p(\cdot)$ is chosen to be that of a normal distribution and the parameter $\lambda$ is identified as the conditional variance $\sigma^2$, with $a = 1$, the score-driven model reduces to the classical GARCH model.

The equation given by \eqref{lambda1} represents a first-order score-driven model, as the relationship between the score and the parameter is lagged by only one period. This framework can be extended to higher orders, such as $p$-th order or even infinite order, as in \citet{janus2014long}, 
\[\lambda_{n}=\omega+\sum_{i=1}^{n}\phi_{i} S(\lambda_{n-i})\nabla_{n-i}.\]
It should be noted that our time index starts from zero; thus, the term ``infinite'' here refers to dependence on the entire history-from time 0 up to the present. Additionally, we omit the autoregressive term, as it can be absorbed into the coefficients $\phi_i$ through the moving average representation. This formulation appears similar to the ARCH($\infty$) model, but the key difference lies in the fact that the infinite-order lagged term $S \nabla$ in the score-driven model is a martingale difference, whereas the $y^2$ term in the ARCH($\infty$) model is not. In other words, rewriting the ARCH($\infty$) model in a form driven by martingale differences gives
\[
\sigma^2_{n} = \omega + \sum_{i=1}^{n} \phi_{i} y^2_{n-i} = \omega + \sum_{i=1}^{n} \phi_{i} \sigma^2_{n-i} + \sum_{i=1}^{n} \phi_{i} (y^2_{n-i} - \sigma^2_{n-i}),
\]
which includes an additional infinite autoregressive component. To address this, \citet{opschoor2019fractional} proposed the following formulation:
\begin{equation}\label{lambdamain}
    \lambda_{n} = \omega + \sum_{i=1}^{n} \phi_{i} [\lambda_{n-i} + S(\lambda_{n-i}) \nabla_{n-i}].
\end{equation}

The long memory score-driven model we considered henceforth refers to \eqref{lambdamain}. To investigate its scaling limit, we define a sequence of processes:
\begin{equation}
    \begin{split}
        &y_t^{(n)}|\mathcal{F}_{t-1}^{(n)} \sim p\left(y_t^{(n)} | \lambda_t^{(n)}, \mathcal{F}^{(n)}_{t-1}\right), \\
        &\lambda_{t}^{(n)} = \omega_n + \sum_{i=1}^{t} \phi_i^{(n)} \left[\lambda_{t-i}^{(n)} + S(\lambda_{t-i}^{(n)}) \nabla_{t-i}^{(n)}\right],
    \end{split}
\end{equation}
where $\phi_i^{(n)} = a_n \phi_i>0$. Let $a_n>0$ and $\uparrow 1$ as $n\to\infty$, $\|\phi\|=\sum_{i=1}^{\infty} \phi_i= 1$, so that $\|\phi^{(n)}\| \uparrow 1$. This case is referred to as nearly unstable. Specifically, we set $\phi_i \sim K/i^{1+\alpha}$, exhibiting power-law decay, where $\alpha \in (1/2, 1)$, a form referred to as ``heavy-tailed'' in \citet{jaisson2016rough}. This condition is critical for the limit to generate roughness. The sigma-algebra is defined as $\mathcal{F}^{(n)}_t = \sigma(\lambda_0^{(n)}, y_i^{(n)}, \lambda_{i+1}^{(n)}, i = 0, 1, \ldots, t)$, since $\lambda_{t+1}^{(n)}$ is determined when $\{y_i^{(n)}\}_{i \leq t}$ are known.

Our primary focus is on the dynamics of time-varying parameter $\lambda_t^{(n)}$, and we aim to study its scaling limiting behavior, which we expect to converge to a SDE driven by a rough fractional Brownian motion. To build intuition for this convergence, we first rewrite the equation. For notation simplicity, the superscript $n$ is sometimes omitted without ambiguity, and the terms $a_n$, $\omega_n$ serving as a reminder that we are dealing with a sequence indexed by $n$. Define $M_t = \sum_{i=0}^{t} S(\lambda_{i}) \nabla_{i}$, with $M_{-1} = 0$. Clearly, $M$ is a martingale, since
\[
\mb{E}(M_{t} - M_{t-1} | \mathcal{F}_{t-1}) = \mb{E}[S(\lambda_{t}) \nabla_{t} | \mathcal{F}_{t-1}] = S(\lambda_{t}) \mb{E}[\nabla_{t} | \mathcal{F}_{t-1}] = 0.
\]
Then, 
\[\begin{split}
    \lambda_t&=\omega_n+\sum_{i=0}^{t-1}a_n\phi_{t-i} [\lambda_{i}+S(\lambda_{i})\nabla_{i}]\\
    &=\omega_n+\sum_{i=0}^{t-1}a_n\phi_{t-i}(M_{i}-M_{i-1})+\sum_{i=0}^{t-1}a_n\phi_{t-i} \lambda_{i}\\
    &=\omega_n+\sum_{i=0}^{t-1}a_n\phi_{t-i}(M_{i}-M_{i-1})+\sum_{i=0}^{t-1}\psi_{t-i}^{(n)} \left[\omega_n+\sum_{j=0}^{i-1}a_n\phi_{i-j}(M_{j}-M_{j-1})\right].
\end{split}\]
The final equality follows from the transformation between AR($\infty$) and MA($\infty$), that is, 
\[\lambda_t=\varepsilon_t+\sum_{i=0}^{t-1}a_n\phi_{t-i}\lambda_i\quad\Longleftrightarrow\quad\lambda_t=\varepsilon_t+\sum_{i=0}^{t-1}\psi^{(n)}_{t-i}\varepsilon_i. \]
Here, $\psi^{(n)} = \sum_{k=1}^{\infty} (a_n \phi)^{*k}$ represents the sum of the $k$-fold convolution of $\phi^{(n)}$. By rearranging the expression and exchanging the order of the double summation, we obtain
\[\lambda_t=\omega_n+\sum_{i=0}^{t-1}\psi_{t-i}^{(n)}\omega_n+\sum_{i=0}^{t-1}a_n\phi_{t-i}(M_{i}-M_{i-1})+\sum_{j=0}^{t-2}(M_j-M_{j-1})\sum_{i=j+1}^{t-1}a_n\phi_{i-j}\psi_{t-i}^{(n)}.\]
Note that 
\[\sum_{i=j+1}^{t-1}a_n\phi_{i-j}\psi_{t-i}^{(n)}=(\phi^{(n)}*\psi^{(n)})_{t-j}, \]
and 
\[\phi^{(n)}*\psi^{(n)}=\sum_{k=1}^{\infty}(\phi^{(n)})^{*{k+1}}=\sum_{k=2}^{\infty}(\phi^{(n)})^{*{k}}=\psi^{(n)}-\phi^{(n)},\]
we have 
\begin{equation}\label{lambdatrans}
    \begin{split}
        \lambda_t&=\omega_n+\sum_{i=0}^{t-1}\psi_{t-i}^{(n)}\omega_n+a_n \phi_1(M_{t-1}-M_{t-2})+\sum_{j=0}^{t-2}\psi_{t-j}^{(n)}(M_j-M_{j-1})\\
        &=\omega_n+\sum_{i=0}^{t-1}\psi_{t-i}^{(n)}\omega_n+\sum_{j=0}^{t-1}\psi_{t-j}^{(n)}(M_j-M_{j-1}).
    \end{split}
\end{equation}

Since $\mb{E}[\lambda_t^{(n)}] \to \omega_n/(1 - a_n)$ as $t \to \infty$, we apply a rescaling to $\lambda_t^{(n)}$ and define a new process
\[\Lambda_t^{(n)} := \frac{(1 - a_n)\theta}{\omega_n} \lambda_{\lf nt \rf}^{(n)}, \quad t \in [0,1],\]
where $\theta > 0$ is a constant controlling the mean level of $\Lambda_t^{(n)}$. We aim to investigate the limiting behavior of $\Lambda_t^{(n)}$ as $n \to \infty$.

Let $\Delta M_k=M_{k}-M_{k-1}$. Assuming
\[
\mbox{Var}[\Delta M_k|\mathcal{F}_{k-1}]=S^2(\lambda_{k})\mb{E}[(\nabla_{k})^2|\mathcal{F}_{k-1}]=U^2(\lambda_{k}).
\]
It follows from \eqref{lambdatrans} that 
\begin{equation}\label{Lambda_main}
    \begin{split}
        \Lambda_t^{(n)}&=(1-a_n)\theta+(1-a_n)\theta\sum_{i=0}^{\lf nt\rf-1}\psi_{\lf nt\rf-i}^{(n)}+\frac{1-a_n}{\omega_n}\theta\sum_{j=0}^{\lf nt\rf-1}\psi_{\lf nt\rf-j}^{(n)}\Delta M_j^{(n)}\\
        &=(1-a_n)\theta+\theta\sum_{i=1}^{\lf nt\rf}(1-a_n)\psi_{i}^{(n)}+\frac{\theta}{\sqrt{n}\omega_n}\sum_{j=0}^{\lf nt\rf-1}n(1-a_n)\psi_{\lf nt\rf-j}^{(n)}U(\lambda_{j}^{(n)})\frac{\Delta M_j^{(n)}}{U(\lambda_{j}^{(n)})\sqrt{n}}.
    \end{split}
\end{equation}
Conceivably, when $n \to \infty$,
\[
\sum_{j=0}^{\lf nt \rf - 1} \frac{\Delta M_j^{(n)}}{U(\lambda_{j}^{(n)}) \sqrt{n}} \Rightarrow W_t,
\]
where $\{W_t\}_{t \geq 0}$ is a standard Brownian motion. And according to \citet{cai2024scaling}, in that case $1/2 < \alpha < 1$, when $1 - a_n \sim n^{-\alpha}$ tends to zero and the coefficients $\phi_i \sim K/i^{1+\alpha}$ exhibit power-law decay, the following weak convergence holds:
\begin{equation}\label{mlweak}
    n (1 - a_n) \psi^{(n)}_{\lf nx \rf} \to f^{\alpha,\kappa}(x) := \kappa x^{\alpha-1} E_{\alpha,\alpha}(-\kappa x^\alpha), \quad \mbox{as} \ n \to \infty,
\end{equation}
where $\kappa > 0$ is a constant, and
\[
E_{\alpha,\beta}(x) = \sum_{n=0}^\infty \frac{x^n}{\Gamma(\alpha n + \beta)}
\]
is the Mittag-Leffler function. A property of the function is that it decays to zero as $x \to -\infty$, and $E_{\alpha,\alpha}(0) = \frac{1}{\Gamma(\alpha)}$. Thus, in terms of singularity, the behavior of $f^{\alpha,\kappa}(t-s)$ is analogous to that of the Riemann-Liouville kernel $(t-s)^{\alpha-1}$.

Therefore, informally and intuitively, as $n \to \infty$, the limit of \eqref{Lambda_main} satisfies the following SDE:
\begin{equation}\label{Lambda_sde1}
    \Lambda_t = \theta \int_{0}^t f^{\alpha,\kappa}(s) \dd s + \nu \int_0^t f^{\alpha,\kappa}(t-s) U(\Lambda_s) \dd W_s.
\end{equation}
In fact, similar to Proposition 4.10 in \citet{el2018microstructural}, the SDE \eqref{Lambda_sde1} has the same solution as the SDE
\begin{equation}\label{Lambda_sde2}
    \Lambda_t = \frac{1}{\Gamma(\alpha)} \int_{0}^t (t-s)^{\alpha-1} \kappa (\theta - \Lambda_s) \dd s + \frac{\kappa \nu}{\Gamma(\alpha)} \int_0^t (t-s)^{\alpha-1} U(\Lambda_s) \dd W_s.
\end{equation}
Since $\alpha - 1 \in (-1/2, 0)$, the term $\frac{1}{\Gamma(\alpha)} \int_0^t (t-s)^{\alpha-1} \dd W_s$ corresponds to a fractional Brownian motion with Hurst parameter $H < 1/2$, which means \eqref{Lambda_sde2} falls into the rough case. It is worth mentioning that when $U(x) = \sqrt{x}$, this actually corresponds to the rough fractional CIR process described in \citet{jaisson2016rough}.

\section{Main results}\label{sec:main results}
Following the heuristic derivation above, in this section, we state our main theorem and present its proof. We begin with the following assumptions. 
\begin{assumption}\label{a1}
    The function $U: \mb{R}\to\mb{R}$ is locally bounded and converges to a finite limit $\gamma$ as $|x|\to\infty$.
\end{assumption}

\begin{assumption}\label{a2}
The sequences $a_n$ and $\omega_n$ have the following asymptotic behavior as $n \to \infty$: 
    \[1-a_n\sim n^{-\alpha}, \quad \omega_n\sim n^{-\frac{1}{2}}.\]
\end{assumption}

\begin{assumption}\label{a3}
    The martingale difference arrays $\{\Delta M_k^{(n)}\}_{k\leq k_n ,n\geq 1}$ satisfy: for every $p > 2$, 
    \[
    \sup_{n\geq 1}\max_{k\leq k_n}\mathbb{E}\left|\Delta M_k^{(n)}\right|^p < \infty.
    \]
\end{assumption}

\begin{remark}
Without loss of generality, we often set the terminal time $T = 1$, so that $k_n = \lf nT \rf = n$ is a common and sufficient choice. This assumption controls all higher-order moments to ensure the validity of the following theorem for all $\alpha \in (1/2, 1)$. In fact, if only a finite $p$-th moment exists for some $p > 2$, the theorem still holds for $\alpha \in [\frac{p+2}{2p}, 1)$.
\end{remark}

Then, we have the following theorem. 
\begin{theorem}\label{mainthm}
    Under Assumptions \ref{a1}-\ref{a3}, the rescaled long memory score-driven parameter process
    \[\Lambda_t^{(n)}:=\frac{1-a_n}{\omega_n}\theta\lambda^{(n)}_{\lf nt\rf}, \quad t\in[0,1]\]
    converges weakly in the Skorohod space $\mathcal{D}([0,1])$ to a rough fractional diffusion process given by the following SDE:
    \begin{equation}\label{roughou}
        \Lambda_t=\frac{1}{\Gamma(\alpha)}\int_{0}^t (t-s)^{\alpha-1}\kappa(\theta-\Lambda_s)\dd s+\frac{\gamma\kappa\nu}{\Gamma(\alpha)}\int_0^t (t-s)^{\alpha-1}\dd W_s. 
    \end{equation}
    Where $\kappa=\lim_{n\to\infty}\frac{\alpha n^\alpha(1-a_n)}{K\Gamma(1-\alpha)}$ and $\nu=\lim_{n\to\infty}\frac{\theta}{\sqrt{n}\omega_n}$. The existence of these limits is guaranteed by Assumption \ref{a2}.
\end{theorem}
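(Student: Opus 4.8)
The plan is to argue directly from the representation \eqref{Lambda_main}, which already splits $\Lambda_t^{(n)}$ into the vanishing constant $(1-a_n)\theta$, a purely deterministic drift $\theta\sum_{i=1}^{\lf nt\rf}(1-a_n)\psi_i^{(n)}$, and a martingale-driven stochastic convolution. I would prove weak convergence in $\mathcal{D}([0,1])$ in the standard two steps: convergence of the finite-dimensional distributions to those of the solution of \eqref{Lambda_sde1} (equivalently, by the resolvent identity already invoked in the excerpt, of \eqref{roughou}), followed by tightness. The three inputs I would rely on are the Mittag-Leffler kernel limit \eqref{mlweak}, a functional martingale central limit theorem for the conditionally standardized increments $\xi_j^{(n)} := \Delta M_j^{(n)}/(U(\lambda_j^{(n)})\sqrt{n})$, and the interplay between Assumption \ref{a1} and the scaling in Assumption \ref{a2}.

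The deterministic part is the easier half. The constant $(1-a_n)\theta\to 0$, while the drift is a Riemann sum: writing $(1-a_n)\psi_i^{(n)}=\frac1n\,n(1-a_n)\psi^{(n)}_{\lf n(i/n)\rf}$ and applying \eqref{mlweak}, it converges to $\theta\int_0^t f^{\alpha,\kappa}(s)\,\dd s$, the drift of \eqref{Lambda_sde1}. To pass from the pointwise kernel limit to convergence of the sums I would upgrade \eqref{mlweak} to a dominated statement, using that $f^{\alpha,\kappa}$ is bounded on compacts away from $0$ and satisfies $f^{\alpha,\kappa}(u)\sim\frac{\kappa}{\Gamma(\alpha)}u^{\alpha-1}$ near $0$, which is integrable since $\alpha>0$.

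The stochastic convolution is where the real work lies and I expect it to be the main obstacle. First I would establish $\sum_{j<\lf nt\rf}\xi_j^{(n)}\Rightarrow W_t$; this is clean because the assumption $\mbox{Var}[\Delta M_k\mid\mathcal{F}_{k-1}]=U^2(\lambda_k)$ makes each $\xi_j^{(n)}$ have conditional variance exactly $1/n$, so the predictable quadratic variation is $\lf nt\rf/n\to t$, and the Lindeberg condition follows from the uniform higher-moment control of Assumption \ref{a3}. The decisive point is that the state-dependent factor $U(\lambda_j^{(n)})$ multiplying the kernel in \eqref{Lambda_main} collapses to the constant $\gamma$: since $\lambda^{(n)}_{\lf ns\rf}=\frac{\omega_n}{(1-a_n)\theta}\Lambda_s^{(n)}$ and $\omega_n/(1-a_n)\sim n^{\alpha-1/2}\to\infty$ for $\alpha>1/2$, once $\Lambda^{(n)}$ is known to stay bounded away from $0$ its argument diverges and Assumption \ref{a1} forces $U(\lambda_j^{(n)})\to\gamma$. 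This is exactly the mechanism that turns the state-dependent diffusion coefficient of \eqref{Lambda_sde2} into the constant appearing in \eqref{roughou}. With this reduction the stochastic term becomes $\frac{\theta}{\sqrt{n}\,\omega_n}\sum_j\big[n(1-a_n)\psi^{(n)}_{\lf nt\rf-j}\big]U(\lambda_j^{(n)})\,\xi_j^{(n)}$, a discrete convolution of the singular kernel against increments of a process converging to $W$; I would prove its convergence to $\gamma\nu\int_0^t f^{\alpha,\kappa}(t-s)\,\dd W_s$ via an $L^2$-isometry computation, verifying that its expected square converges to $(\gamma\nu)^2\int_0^t [f^{\alpha,\kappa}(u)]^2\,\dd u$—finite precisely because $2(\alpha-1)>-1$—together with a finite-dimensional central limit theorem for the triangular array of convolution weights via characteristic functions.

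Tightness is the final and most delicate ingredient, because the limit is rough. The process \eqref{roughou} is only $H$-Hölder with $H=\alpha-\tfrac12<\tfrac12$, so a Kolmogorov-type bound $\mb{E}|\Lambda_t^{(n)}-\Lambda_s^{(n)}|^p\lesssim|t-s|^{pH}$ requires $p>1/H$ to produce an exponent exceeding $1$; this is exactly why Assumption \ref{a3} is stated for every $p\ge 2$. I would obtain such increment bounds by applying the Burkholder--Davis--Gundy inequality to the stochastic convolution and controlling the $L^p$-norm of the kernel differences $\psi^{(n)}_{\lf nt\rf-j}-\psi^{(n)}_{\lf ns\rf-j}$. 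The central difficulty throughout is to control the singular kernel near the diagonal $s\approx t$ simultaneously with the self-referential collapse $U(\lambda^{(n)})\to\gamma$; I would resolve it by localizing on the event that $\Lambda^{(n)}$ remains in a compact subset of $(0,\infty)$, proving uniform $L^p$ moment and increment estimates there, and finally showing that this localization has asymptotically full probability.
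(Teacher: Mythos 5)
Your plan follows the direct route---finite-dimensional distributions plus tightness in $\mathcal{D}([0,1])$---which is genuinely different from the paper's proof: the paper circumvents tightness entirely by writing the limit as $\Lambda=\mathcal{G}^{\alpha-1}Y$ for a generalized fractional operator applied to $Y_t=\theta t+\nu\gamma W_t$, proving a H\"older-space Donsker theorem for the interpolated martingale $Y^{(n)}$, invoking continuity of $\mathcal{G}^{\alpha-1}$ from $\mathcal{C}^{\lambda}$ to $\mathcal{C}^{\lambda+\alpha-1}$ together with the continuous mapping theorem, and then showing that $\mathcal{G}^{\alpha-1}Y^{(n)}$ and $\Lambda^{(n)}$ are asymptotically indistinguishable. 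Your route is not unreasonable in principle, but as written it contains a step that fails: the localization used to collapse $U(\lambda^{(n)}_j)$ to $\gamma$. The limit \eqref{roughou} has \emph{constant} diffusion coefficient; equivalently, by \eqref{Lambda_sde1} with $U\equiv\gamma$, it is $\Lambda_t=\theta\int_0^t f^{\alpha,\kappa}(s)\dd s+\nu\gamma\int_0^t f^{\alpha,\kappa}(t-s)\dd W_s$, a Gaussian process started at $\Lambda_0=0$ (indeed $\lambda^{(n)}_0=\omega_n$, so $\Lambda^{(n)}_0=(1-a_n)\theta\to 0$). For every $t>0$ it is negative with strictly positive probability, so the event that $\Lambda^{(n)}$ stays in a compact subset of $(0,\infty)$ cannot have asymptotically full probability. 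Worse, on the non-negligible event $\{\Lambda^{(n)}_s<0\}$ one has $\lambda^{(n)}_{\lf ns\rf}$ of order $n^{\alpha-1/2}\Lambda^{(n)}_s\to-\infty$, a regime about which Assumption \ref{a1} says nothing, since it only controls $U(x)$ as $x\to+\infty$. The paper sidesteps this trap because $U$ enters its argument only through a Ces\`aro average of expectations: by the stationarity in Assumption \ref{a3}, the H\"older Donsker step uses $\lim_n\frac1n\sum_i\mb{E}[U^2(\lambda^{(n)}_i)]=\gamma^2$ as the limiting variance, never a pathwise or in-probability statement about $U(\lambda^{(n)}_j)$ inside the singular convolution, which is what your decomposition requires.

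Separately, the two quantitative kernel estimates your fdd and tightness steps rest on are the technical heart of the problem, not routine verifications. Your $L^2$-isometry computation needs
\[
\frac1n\sum_{j=0}^{\lf nt\rf-1}\bigl[n(1-a_n)\psi^{(n)}_{\lf nt\rf-j}\bigr]^2\to\int_0^t f^{\alpha,\kappa}(u)^2\,\dd u,
\]
which does not follow from the pointwise limit \eqref{mlweak} because of the singularity at the diagonal; the paper devotes Lemma \ref{philemma} and Proposition \ref{L2convergence} (a Fourier-isometry and domination argument on the characteristic functions) precisely to this $L^2$ statement. Your Kolmogorov-type tightness bound needs, in addition, a uniform-in-$n$ modulus estimate of the form
\[
\sup_n\Bigl(\int_0^s\bigl|\rho_n(t-u)-\rho_n(s-u)\bigr|^2\dd u+\int_s^t\rho_n(t-u)^2\dd u\Bigr)\lesssim|t-s|^{2\alpha-1},
\]
for the discrete kernels $\rho_n$, which neither \eqref{mlweak} nor Proposition \ref{L2convergence} provides; obtaining it is exactly the tightness difficulty the paper's operator-continuity argument was designed to bypass. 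To turn your proposal into a proof you would need (i) to reformulate the role of $U$ as a moment/Ces\`aro statement rather than a pathwise localization in $(0,\infty)$, and (ii) to actually establish these uniform kernel estimates.
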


In fact, we restrict our attention to the case of rough OU process in \eqref{Lambda_sde2}. This is because our proof strategy fundamentally differs from that of \cite{jaisson2016rough} or \cite{wang2025rough}: while they show that the Hawkes process (equivalent to the integrated intensity in probability) converges to a integrated rough CIR process, we directly establish the convergence of the parameter process itself to a rough process, rather than its integral. The difficulty in dealing with the process itself lies in verifying its tightness and we attempt to circumvent this by applying the continuous mapping theorem. 

To this end, we regard the rough diffusion process as the image of a diffusion process under the generalized fractional operator (GFO), which is inspired by \cite{horvath2024functional}. Specifically, for $\lambda\in (0,1)$ and $\alpha\in (-\lambda, 1-\lambda)$, $\mathcal{G}^\alpha$ is the GFO associated to kernel 
\[g\in \mathcal{L}^{\alpha}:=\left\{g\in C^2((0,1]): \left|\frac{g(u)}{u^\alpha}\right|, \left|\frac{g'(u)}{u^{\alpha-1}}\right| \ \text{and}\  \left|\frac{g''(u)}{u^{\alpha-2}}\right| \ \mbox{are bounded}\right\}\]
defined on $\lambda$-H\"{o}lder space $\mathcal{C}^\lambda ([0,1])$,
\begin{equation}\label{GFO_def}
    (\mathcal{G}^\alpha f)(t)=\left\{\begin{aligned}
        &\int_{0}^t (f(s)-f(0))\frac{\dd }{\dd t}g(t-s)\dd s,\quad \text{if} \ \alpha\geq 0,\\
        &\frac{\dd }{\dd t}\int_{0}^t (f(s)-f(0))g(t-s)\dd s,\quad \text{if} \ \alpha< 0.
    \end{aligned}\right.
\end{equation}
When $g(x) = \frac{x^\alpha}{\Gamma(\alpha+1)}$, this reduces to the classical Riemann-Liouville operator. In this paper, we choose $g(x)=f^{\alpha, \kappa}(x)\in \mathcal{L}^{\alpha-1}$ and focus on the case $\alpha\in(1/2,1)$. Consequently, the corresponding GFO actually becomes
\begin{equation}\label{GFO}
    (\mathcal{G}^{\alpha-1} f)(t)=\frac{\dd }{\dd t}\int_{0}^t (f(s)-f(0))f^{\alpha, \kappa}(t-s)\dd s.
\end{equation}

Define $Y_t:=\theta t+\nu \gamma W_t$, and 
\begin{equation}\label{Lambda_t}
    \Lambda_t :=\theta\int_{0}^t f^{\alpha,\kappa}(s)\dd s+\nu\gamma\int_0^t f^{\alpha,\kappa}(t-s)\dd W_s
\end{equation}
actually is the solution of the equation \eqref{roughou} we aim to converge to: 
\begin{proposition}\label{same_solution}
    The process $\Lambda$ is the unique strong solution to the following rough SDE: 
    \[\Lambda_t=\frac{1}{\Gamma(\alpha)}\int_{0}^t (t-s)^{\alpha-1}\kappa(\theta-\Lambda_s)\dd s+\frac{\gamma\kappa\nu}{\Gamma(\alpha)}\int_0^t (t-s)^{\alpha-1}\dd W_s. \]
\end{proposition}

\begin{proof}
    See \ref{proof:same_solution}.
\end{proof}

Accordingly, in the subsequent proof, we take the clearer form of equation \eqref{Lambda_t} as the target of convergence. Consider the partition $\mathcal{T}:=\{t_i=i/n, i=0,1,\ldots, n\}$. We then define
\begin{align}
    &Y_t^{(n)} = \theta t + \frac{\theta}{n\omega_n} \left[ (1 - nt+\lf nt\rf) M_{\lf nt \rf-1}^{(n)} + (nt-\lf nt\rf) M_{\lf nt \rf}^{(n)} \right],\\
    &\widetilde{\Lambda}_t^{(n)}=\sum_{i=0}^{\lf nt\rf-1}\left(\theta + \frac{\theta}{\omega_n} \Delta M_{i}^{(n)}\right)\int_{t_i}^{t_{i+1}}f^{\alpha,\kappa}(t-s)\dd s+\left(\theta + \frac{\theta}{\omega_n} \Delta M_{\lf nt\rf}^{(n)}\right)\int_{t_{\lf nt\rf}}^{t}f^{\alpha,\kappa}(t-s)\dd s. \label{tlidelambda}
\end{align}

The subsequent proof will proceed in four steps, as illustrated by the following diagram, which provides a clear path.
\begin{enumerate}[label=(\roman*)]
    \item Establishing the representation $\Lambda_t = \mathcal{G}^{\alpha-1}Y_t$, and $\widetilde{\Lambda}_t^{(n)} = \mathcal{G}^{\alpha-1}Y_t^{(n)}$;
    \item Proving the weak convergence $Y^{(n)}\Rightarrow Y$ in H\"{o}lder space $\mathcal{C}^{\lambda}$ for $\lambda<1/2$;
    \item Demonstrating that $\mathcal{G}^{\alpha-1}$ is a continuous operator from $\mathcal{C}^{\lambda}$ to $\mathcal{C}^{\lambda+\alpha-1}$, which yields the convergence $\widetilde{\Lambda}_t^{(n)} \Rightarrow \Lambda_t$ in $\mathcal{C}^{\lambda+\alpha-1}$, thus in $\mathcal{D}$;
    \item Finally, by proving $\widetilde{\Lambda}_t^{(n)}$ and $\Lambda_t^{(n)}$ are asymptotically indistinguishable, we obtain $\Lambda_t^{(n)}\Rightarrow \Lambda_t$. 
\end{enumerate}

\begin{center}
\begin{tikzcd}[column sep=huge, row sep=huge]
  & Y^{(n)}_t \arrow[r, Rightarrow, "\|\cdot\|_\lambda"] \arrow[d, "\mathcal{G}^{\alpha-1}"] 
    & Y_t \arrow[d, "\mathcal{G}^{\alpha-1}"] 
    & (\mathcal{C}^\lambda([0,1]), \|\cdot\|_\lambda) \arrow[d, "\mathcal{G}^{\alpha-1}"] \\
  \Lambda^{(n)}_t \arrow[r, leftrightarrow, "\approx"] 
    & \widetilde{\Lambda}^{(n)}_t \arrow[r, Rightarrow, "\|\cdot\|_{\lambda+\alpha-1}"] 
    & \Lambda_t 
    & (\mathcal{C}^{\lambda+\alpha-1}([0,1]), \|\cdot\|_{\lambda+\alpha-1})
\end{tikzcd}
\end{center}

Before proceeding with the main proof, we first recall the key convergence result \eqref{mlweak} for the Mittag-Leffler function, and extend this result to $L^2$-convergence for our subsequent arguments. 

\subsection{Converges to $f^{\alpha,\kappa}(x)$}

\cite{jaisson2016rough} prove that the mean of geometric sums converges in distribution to a random variable with density function $f^{\alpha,\kappa}(x)$. Our setting differs slightly because the summed terms follow a discrete distribution. 

Specifically, we define the sequence of random variables
\[G_n=\frac{\sum_{k=1}^{I_n}U_k}{n},\]
where $I_n$ follows a geometric distribution with parameter $1-a_n$, that is, $\mathbb{P}(I_n=i)=a_n^{i-1}(1-a_n)$; ${U_k}$ is a sequence of i.i.d. random variables with distribution $\phi$, that is, $\mathbb{P}(U_1=i)=\phi_i$. Then, the random variable $G_n$, called the mean of geometric sums, has its law given by
\[\begin{split}
    \mb{P}(G_n=i)&=\sum_{j=1}^{\infty}\mb{P}(\sum_{k=1}^{j}U_k=ni)\cdot \mb{P}(I_n=j)=(1-a_n)\sum_{j=1}^{\infty}\phi^{*j}_{ni} a_n^{j-1}\\
    &=\frac{1-a_n}{a_n}\sum_{j=1}^{\infty}(a_n\phi)^{*j}_{ni}=\frac{(1-a_n)\psi^{(n)}_{ni}}{a_n}, \quad \text{for}\  ni\in\mb{N}^+. 
\end{split}\]
Thus, its distribution function is $F_n(x)=\sum_{i=1}^{\lf nx\rf}\frac{(1-a_n)\psi^{(n)}_{i}}{a_n}$, density function $\rho_n(x)=\frac{n(1-a_n)}{a_n}\psi^{(n)}_{\lf nx\rf}$. Considering the characteristic function of $G_n$. 
\[\begin{split}
    \hat{\rho}_n(u)&=\mb{E}(\ee^{-\ii uG_n})=\mb{E}(\ee^{-\frac{\ii u}{n}\sum_{k=1}^{I_n}U_k})=\sum_{j=1}^{\infty}\mb{E}(\ee^{-\frac{\ii u}{n}\sum_{k=1}^{j}U_k})\mb{P}(I_n=j)\\
    &=(1-a_n)\sum_{j=1}^{\infty}[\mb{E}(\ee^{-\frac{\ii u}{n}U_k})]^j a_n^{j-1}=\frac{1-a_n}{a_n}\sum_{j=1}^{\infty}[a_n\hat{\phi}(u/n)]^j\\
    &=\frac{1-a_n}{a_n}\frac{a_n\hat{\phi}(u/n)}{1-a_n\hat{\phi}(u/n)}=\frac{\hat{\phi}(u/n)}{1-\frac{a_n}{1-a_n}[\hat{\phi}(u/n)-1]},
\end{split}\]
where $\hat{\phi}$ denotes the Fourier transform of $\phi$, or equivalently, the characteristic function of the random variable $U_1$. Since $\phi_i \sim \frac{K}{i^{1+\alpha}}$, by the Karamata-Tauberian theorem (see \cite{cai2024scaling}), we can characterize the behavior of $\hat{\phi}$ near the origin,
\begin{equation}\label{hatphi}
    1-\hat{\phi}(u)\underset{u \to 0}{\sim} \frac{K\Gamma(1-\alpha)}{\alpha}(\ii u)^\alpha.
\end{equation}
Assumption \ref{a2} implies that the limit of $\frac{\alpha n^\alpha(1-a_n)}{K\Gamma(1-\alpha)}$ exists and we denote it by $\kappa$. Consequently, for any fixed $u$, we have
\[\hat{\rho}_n(u)\to \frac{\kappa}{\kappa+(\ii u)^\alpha}, \quad \text{as} \ n\to\infty, \]
and the limit happens to be the Fourier transform of $f^{\alpha,\kappa}(x)=\kappa x^{\alpha-1} E_{\alpha,\alpha}(-\kappa x^\alpha)$. Therefore, 
\begin{equation}\label{uniform}
    F_n(x)=\sum_{i=1}^{\lf nx\rf}\frac{(1-a_n)\psi^{(n)}_{i}}{a_n}\to \int_{0}^x f^{\alpha,\kappa}(s)\dd s,\quad \text{as} \ n\to\infty.
\end{equation}
By Dini's theorem, this convergence is uniform. 

In fact, for the density function itself, the convergence $\hat{\rho}_n \to \hat{f}^{\alpha,\kappa}$ implies only that the density function $\rho_n$ converges weakly to $f^{\alpha,\kappa}$. We now proceed to prove that this convergence also holds in $L^2$. To this end, we first provide some estimates for $\hat{\phi}$. Note that in $\hat{\rho}_n$, the function $\hat{\phi}$ appears as the form $\hat{\phi}(u/n)=:\hat{\phi}_n(u)$. We regard this as the Fourier transform of the step function $\phi_n(x) = \phi_{\lf nx \rf}$, which is consistent with the representation of $\rho_n$.
\begin{lemma}\label{philemma}
    For any $|u|>1$, we have $|\hat{\phi}_n(u)|\leq |u|^{-\alpha}$.
    Additionally, there exist constants $c_1,c_2>0$ such that
    \[|1-\hat{\phi}_n(u)|\geq \left\{\begin{aligned}
        c_1|u/n|^{\alpha}, \quad \text{if}\ |u|\leq 1,\\
        c_2, \quad  \text{if}\ |u|> 1.
    \end{aligned}\right.\]
\end{lemma}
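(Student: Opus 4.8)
The whole lemma rests on the single analytic input \eqref{hatphi}, namely $1-\hat\phi(v)\sim\frac{K\Gamma(1-\alpha)}{\alpha}(\ii v)^\alpha$ as $v\to0$, supplemented by two elementary facts: $|\hat\phi(v)|\le 1$ for every $v$, and $\hat\phi(v)=1$ only on $2\pi\mb{Z}$ (since $\phi_i\sim K/i^{1+\alpha}>0$ makes the support of $\phi$ generate $\mb{Z}$). I would also keep in mind that $\hat\phi_n$ is invoked in two guises: literally as $\hat\phi(u/n)$, and as the transform of the discretization $\phi_n(x)=\phi_{\lf nx\rf}$, which a one-line computation shows equals $\frac{1-\ee^{-\ii u/n}}{\ii u}\hat\phi(u/n)$. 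The near-origin rate $|u/n|^\alpha$ will come from the former and the tail decay $|u|^{-\alpha}$ from the latter.

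For the upper bound I would use the discretized form: since $|1-\ee^{-\ii u/n}|=2|\sin(u/2n)|\le\min(2,|u|/n)$ and $|\hat\phi|\le1$, one gets $|\hat\phi_n(u)|\le\min(2,|u|/n)/|u|$. I then split the range $|u|>1$ at the threshold $2^{1/(1-\alpha)}$: for $|u|\ge 2^{1/(1-\alpha)}$ the estimate $2/|u|\le|u|^{-\alpha}$ is immediate, while for $1<|u|<2^{1/(1-\alpha)}$ the bound reduces to $1/n$, which lies below the (positive, $n$-independent) value $|u|^{-\alpha}$ once $n$ is large. This step uses only $|\hat\phi|\le 1$; the heavy tail is not needed here, the decay being supplied entirely by the discretization factor.

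For the lower bounds I would treat the two ranges separately. When $|u|\le1$ the argument $u/n$ tends to $0$ uniformly, so \eqref{hatphi} gives $|1-\hat\phi(u/n)|=\frac{K\Gamma(1-\alpha)}{\alpha}|u/n|^\alpha(1+o(1))\ge c_1|u/n|^\alpha$ for all large $n$; this is the sharp near-origin rate and is precisely the quantity that governs the denominator of $\hat\rho_n$ near the origin. When $|u|>1$ I would instead read the bound off the upper estimate via the reverse triangle inequality: for $|u|\ge2$ one has $|1-\hat\phi_n(u)|\ge 1-|\hat\phi_n(u)|\ge 1-2^{-\alpha}>0$, and for $1<|u|<2$ the upper estimate already gives $|\hat\phi_n(u)|\le 1/n$, hence $|1-\hat\phi_n(u)|\ge 1-1/n$; taking $c_2$ to be the minimum of these closes the case.

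The main obstacle, and the reason the two guises of $\hat\phi_n$ are unavoidable, is the lattice nature of $\phi$: because $\phi$ is supported on $\mb{Z}$, $\hat\phi$ is $2\pi$-periodic, does not decay, and attains modulus one on $2\pi\mb{Z}$, so neither the $|u|^{-\alpha}$ decay nor a uniform separation from $1$ can be read off $\hat\phi(u/n)$ globally. The decay must be manufactured by the discretization factor $\frac{1-\ee^{-\ii u/n}}{\ii u}$ (equivalently, by restricting the downstream spectral integral to one fundamental period $|u|\le\pi n$, on which $\hat\phi$ is bounded away from $1$ off the origin by compactness and aperiodicity). The only genuinely delicate point is matching the sharp near-origin rate $|u/n|^\alpha$ of the first lower bound with the far-field constant of the second across the seam $|u|\asymp1$; everything else is routine bookkeeping of the two regimes.
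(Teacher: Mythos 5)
Your proposal is correct and shares the paper's skeleton---decay manufactured by the discretization, the rate \eqref{hatphi} near the origin---but your treatment of the two lower bounds is genuinely different from the paper's, and the difference matters. The paper applies \eqref{hatphi} only on $|u|<\delta$ and covers the leftover ranges $[\delta,1]$ and $[1,\delta']$ by two continuity--compactness arguments, with positivity of the minima coming from the fact that a characteristic function of an integer-valued variable equals $1$ only at multiples of $2\pi n$. Those minima $m$, $m'$ are extracted from the $n$-dependent functions $\hat{\phi}_n$, and the paper never shows they stay bounded away from zero uniformly in $n$---yet uniformity is exactly what the dominated-convergence step in Proposition \ref{L2convergence} needs, since the dominating function must be the same for all $n$. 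You avoid both compactness arguments: on $|u|\le 1$ you exploit that $u/n\to 0$ uniformly, so \eqref{hatphi} alone yields an $n$-free constant for all large $n$; on $|u|>1$ you get the separation from $1$ by the reverse triangle inequality applied to your upper bound. The price is that your estimates hold only once $n\gtrsim 2^{\alpha/(1-\alpha)}$, which is immaterial for a lemma used asymptotically; the gain is manifest $n$-uniformity, which quietly repairs a gap in the paper's argument. A second point in your favor: you state explicitly that the upper bound and the far-field lower bound can only be about the transform of the step function $\phi_{\lf nx\rf}$ (the factor $(1-\ee^{-\ii u/n})/(\ii u)$ supplies all the decay, as your computation shows), whereas the near-origin rate is a statement about $\hat{\phi}(u/n)$, which is $2\pi n$-periodic and therefore violates both far-field bounds near $u=2\pi n$. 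The paper conflates these two objects under the single symbol $\hat{\phi}_n$, both in the lemma and again when the bounds are invoked in Proposition \ref{L2convergence}; your explicit bookkeeping of which estimate holds for which object does not introduce an ambiguity but exposes one that is already present in the paper.
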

\begin{proof}
    See \ref{proof:philemma}.
\end{proof}

Utilizing the Fourier isometry, the estimates for the characteristic function $\hat{\phi}_n$ established in Lemma \ref{philemma} help us analyze the $L^2$ distance $\|\rho_n-f^{\alpha,\kappa}\|_2$ in the Fourier domain. We are now equipped to obtain the stronger convergence result.

\begin{proposition}\label{L2convergence}
    The sequence $\rho_n$ converges to $f^{\alpha,\kappa}$ in the sense of $L^2$, i.e.,
    \begin{equation}
        \|\rho_n-f^{\alpha,\kappa}\|_2:=\left(\int_{0}^1  \left[\rho_n(x)-f^{\alpha,\kappa}(x)\right]^2 \dd x\right)^{1/2}\to 0, \ \text{as} \ n\to\infty. 
    \end{equation}
\end{proposition}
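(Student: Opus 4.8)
The plan is to pass to the Fourier side and exploit Plancherel's theorem, for which the pointwise convergence $\hat\rho_n\to\hat f^{\alpha,\kappa}$ is already available from the excerpt. First I would note that it suffices to prove the stronger statement $\|\rho_n-f^{\alpha,\kappa}\|_{L^2(\mb{R})}\to 0$, since both densities are supported on $[0,\infty)$ and the integral over $[0,1]$ is dominated by the integral over all of $\mb{R}_+$. Before applying Plancherel I must verify that both functions genuinely lie in $L^2(\mb{R})$, and this is where $\alpha>1/2$ first enters: near the origin $f^{\alpha,\kappa}(x)\sim \frac{\kappa}{\Gamma(\alpha)}x^{\alpha-1}$, and $x^{\alpha-1}\in L^2$ near $0$ precisely when $2(\alpha-1)>-1$, i.e.\ $\alpha>1/2$, while at infinity $f^{\alpha,\kappa}$ decays polynomially; for $\rho_n$, a nonnegative step function with $\sum_i\psi_i^{(n)}=a_n/(1-a_n)<\infty$ and bounded summand, square-integrability is immediate. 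Granting this, Plancherel reduces the claim to establishing $\|\hat\rho_n-\hat f^{\alpha,\kappa}\|_{L^2(\mb{R})}\to 0$.

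With pointwise convergence $\hat\rho_n(u)\to \kappa/(\kappa+(\ii u)^\alpha)$ in hand, the remaining and central task is to upgrade it to $L^2$-convergence by producing an $n$-uniform, square-integrable dominating function and invoking dominated convergence. I would split $\mb{R}$ into $\{|u|\le 1\}$ and $\{|u|>1\}$. On the bounded region both moduli are at most $1$: indeed $|\hat\rho_n|\le 1$ as a characteristic function, and $|\hat f^{\alpha,\kappa}(u)|=\kappa/|\kappa+(\ii u)^\alpha|\le 1$ because $\mathrm{Re}(\kappa+(\ii u)^\alpha)=\kappa+|u|^\alpha\cos(\alpha\pi/2)\ge\kappa$ (note $\cos(\alpha\pi/2)>0$ for $\alpha\in(1/2,1)$). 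The bounded convergence theorem therefore disposes of this piece.

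On the tail $\{|u|>1\}$ I would write the denominator of $\hat\rho_n$ as $D_n(u)=1+\frac{a_n}{1-a_n}(1-\hat\phi_n(u))$ and observe that $\mathrm{Re}(D_n(u))=1+\frac{a_n}{1-a_n}(1-\mathrm{Re}\,\hat\phi_n(u))\ge 1$, since $\mathrm{Re}\,\hat\phi_n(u)=\mb{E}[\cos(uU_1/n)]\le 1$; hence $|D_n(u)|\ge 1$ uniformly in $n$ and $u$. Combining this with the decay estimate $|\hat\phi_n(u)|\le |u|^{-\alpha}$ of Lemma \ref{philemma} yields $|\hat\rho_n(u)|=|\hat\phi_n(u)|/|D_n(u)|\le |u|^{-\alpha}$, while the bound $|\kappa+(\ii u)^\alpha|\ge|u|^\alpha\cos(\alpha\pi/2)$ gives $|\hat f^{\alpha,\kappa}(u)|\le C|u|^{-\alpha}$ with $C=\kappa/\cos(\alpha\pi/2)$. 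Thus $|\hat\rho_n(u)-\hat f^{\alpha,\kappa}(u)|^2\le (1+C)^2|u|^{-2\alpha}$ on $\{|u|>1\}$, and this dominating function is integrable there exactly because $2\alpha>1$. Dominated convergence then kills the tail, and combining the two regions gives $\|\hat\rho_n-\hat f^{\alpha,\kappa}\|_{L^2(\mb{R})}\to 0$, whence the result.

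I expect the main obstacle to be the uniform tail control: one must simultaneously exploit the decay of the numerator $\hat\phi_n$ (ultimately the Karamata--Tauberian estimate \eqref{hatphi} encoded in Lemma \ref{philemma}) and keep the denominator $D_n$ bounded away from $0$ uniformly in $n$, so that the common majorant $|u|^{-\alpha}$ emerges. It is precisely here that the heavy-tail exponent condition $\alpha>1/2$ plays its essential role, being exactly what renders $|u|^{-\alpha}$ square-integrable at infinity (and $f^{\alpha,\kappa}$ square-integrable at the origin), mirroring the role $\alpha>1/2$ plays in generating roughness.
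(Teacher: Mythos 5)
Your proposal is correct, and its backbone is the same as the paper's: Plancherel's isometry reduces the claim to $L^2$-convergence of the Fourier transforms, which is then deduced from the already-established pointwise convergence $\hat{\rho}_n\to\hat{f}^{\alpha,\kappa}$ plus an $n$-uniform square-integrable majorant of order $|u|^{-\alpha}$, with $2\alpha>1$ supplying integrability at infinity. The genuine difference lies in how the majorant is produced. The paper bounds $|\hat{\rho}_n|$ on both regions via the \emph{lower} bounds of Lemma \ref{philemma}, namely $|1-\hat{\phi}_n(u)|\geq c_1|u/n|^{\alpha}$ for $|u|\leq 1$ and $|1-\hat{\phi}_n(u)|\geq c_2$ for $|u|>1$ --- precisely the part of that lemma that requires the continuity/compactness minimization arguments. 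You bypass those lower bounds entirely: on $\{|u|\leq 1\}$ you use only $|\hat{\rho}_n|\leq 1$ (it is a characteristic function) together with $|\hat{f}^{\alpha,\kappa}|\leq 1$ and bounded convergence, and on $\{|u|>1\}$ you write $\hat{\rho}_n=\hat{\phi}_n/D_n$ with $D_n=1+\tfrac{a_n}{1-a_n}(1-\hat{\phi}_n)$ and observe $|D_n|\geq\mathrm{Re}(D_n)\geq 1$, so that only the elementary upper bound $|\hat{\phi}_n(u)|\leq|u|^{-\alpha}$ from Lemma \ref{philemma} is needed. This buys something real: your denominator control $|D_n|\geq 1$ is uniform in $n$ by inspection, whereas the paper's constants $c_1,c_2$ arise from minimization of $|1-\hat{\phi}_n|$ on compact sets, and the uniformity of those constants in $n$ is not spelled out there (the minimum value $m$ in that proof depends a priori on $n$); your route sidesteps this delicacy. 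Your additional checks --- that $f^{\alpha,\kappa}$ and $\rho_n$ actually lie in $L^2(\mb{R}_+)$ (using $f^{\alpha,\kappa}(x)\sim\tfrac{\kappa}{\Gamma(\alpha)}x^{\alpha-1}$ near the origin, square-integrable exactly when $\alpha>1/2$), and the explicit bound $|\hat{f}^{\alpha,\kappa}(u)|\leq\kappa/(|u|^{\alpha}\cos(\alpha\pi/2))$ --- fill in steps the paper applies implicitly when invoking the isometry and dominating the limit function.
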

\begin{proof}
    See \ref{proof:L2convergence}.
\end{proof}

\subsection{Proof of main theorem}
We proceed with the proof following the aforementioned four steps. First, we need to demonstrate how the convergence target $\Lambda_t$ can be expressed in terms of the operator $\mathcal{G}^{\alpha-1}$ defined in \eqref{GFO}. 
\begin{proposition}\label{Gaction}
    The equality $\Lambda_t=(\mathcal{G}^{\alpha-1} Y)_t$ holds almost surely for all $t\in [0,1]$. 
\end{proposition}

\begin{proof}
    Since the paths of $Y_t$ have $1/2-\epsilon$ Hölder continuity, for $\alpha\in(1/2,1)$, the exponent $\alpha-1$ fall within $(-1/2,0)$, which is well-defined in this context of GFO. By definition of \eqref{GFO}, 
    \[(\mathcal{G}^{\alpha-1} Y)(t)=\frac{\dd }{\dd t}\int_0^t (Y(s)-Y(0)) f^{\alpha,\kappa}(t-s)\dd s.\]
    For $\varepsilon>0$, define the operator 
    \[(\mathcal{G}^\alpha_{\varepsilon} Y)(t)= \int_0^{t-\varepsilon} (Y(s)-Y(0)) f^{\alpha,\kappa}(t-s)\dd s.\]
    Then, for any $t\in[0,1]$, we almost surely have 
    \[\begin{split}
        \frac{\dd }{\dd t}(\mathcal{G}^\alpha_{\varepsilon} Y)(t)&=(Y(t-\varepsilon)-Y(0))f^{\alpha, \kappa}(\varepsilon)-\int_0^{t-\varepsilon}(Y(s)-Y(0)) \dd f^{\alpha,\kappa}(t-s)\\
        &=\int_{0}^{t-\varepsilon}f^{\alpha,\kappa}(t-s) \dd Y_s\\
        &=\theta\int_{0}^{t-\varepsilon}f^{\alpha,\kappa}(t-s)\dd s+\nu\gamma \int_{0}^{t-\varepsilon}f^{\alpha,\kappa}(t-s)\dd W_s\to\Lambda_t,\quad \text{as}\ \varepsilon\to 0.\\
    \end{split}\]
    This convergence is uniform since the $L^2$-integrability of $f^{\alpha,\kappa}$. Consequently, we obtain
    \[(\mathcal{G}^{\alpha-1}Y)(t)=\frac{\dd }{\dd t}\lim_{\varepsilon\to0 }(\mathcal{G}^{\alpha}_\varepsilon Y)(t)=\lim_{\varepsilon\to0 }\frac{\dd }{\dd t}(\mathcal{G}^\alpha_{\varepsilon} Y)(t)=\Lambda_t. \]
\end{proof}

\begin{proposition}
    The equality $ \widetilde{\Lambda}_t^{(n)}=(\mathcal{G}^{\alpha-1} Y^{(n)})_t$ holds almost surely for all $t\in [0,1]$.
\end{proposition}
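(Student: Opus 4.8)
The plan is to mirror the argument of the preceding proposition almost verbatim, the only new ingredient being the explicit identification of $Y^{(n)}$ as an absolutely continuous integrator whose density matches the summands in \eqref{tlidelambda}. First I would record two elementary facts about $Y_t^{(n)}$. Since $M_{-1}^{(n)}=0$, one has $Y_0^{(n)}=0$, so $Y_s^{(n)}-Y_0^{(n)}=Y_s^{(n)}$. More importantly, $Y^{(n)}$ is piecewise linear: on each interval $(t_i,t_{i+1})$ the definition gives $Y_t^{(n)}=\theta t+\frac{\theta}{n\omega_n}[(1-nt+i)M_{i-1}^{(n)}+(nt-i)M_i^{(n)}]$, whose derivative is the constant $\theta+\frac{\theta}{\omega_n}\Delta M_i^{(n)}$. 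Hence $Y^{(n)}$ is absolutely continuous with a.e. derivative $\frac{\dd}{\dd s}Y_s^{(n)}=\theta+\frac{\theta}{\omega_n}\Delta M_{\lf ns\rf}^{(n)}$, and comparing this against \eqref{tlidelambda} shows at once that $\widetilde{\Lambda}_t^{(n)}=\int_0^t f^{\alpha,\kappa}(t-s)\,\dd Y_s^{(n)}$: the sum over the full subintervals $[t_i,t_{i+1}]$ plus the partial last interval $[t_{\lf nt\rf},t]$ is exactly the Lebesgue integral of $f^{\alpha,\kappa}(t-\cdot)$ against this piecewise-constant density.

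With this identification the task reduces to showing $(\mathcal{G}^{\alpha-1}Y^{(n)})(t)=\int_0^t f^{\alpha,\kappa}(t-s)\,\dd Y_s^{(n)}$, which I would establish through the regularized operator $(\mathcal{G}^\alpha_\varepsilon Y^{(n)})(t)=\int_0^{t-\varepsilon}Y_s^{(n)}f^{\alpha,\kappa}(t-s)\,\dd s$. Differentiating via the Leibniz rule produces a boundary term $Y_{t-\varepsilon}^{(n)}f^{\alpha,\kappa}(\varepsilon)$ together with $\int_0^{t-\varepsilon}Y_s^{(n)}(f^{\alpha,\kappa})'(t-s)\,\dd s$; integrating the latter by parts—legitimate because $Y^{(n)}$ is absolutely continuous—yields $-Y_{t-\varepsilon}^{(n)}f^{\alpha,\kappa}(\varepsilon)+\int_0^{t-\varepsilon}f^{\alpha,\kappa}(t-s)\,\dd Y_s^{(n)}$, the remaining boundary contribution at $s=0$ vanishing since $Y_0^{(n)}=0$. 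The two $Y_{t-\varepsilon}^{(n)}f^{\alpha,\kappa}(\varepsilon)$ terms cancel, leaving $\frac{\dd}{\dd t}(\mathcal{G}^\alpha_\varepsilon Y^{(n)})(t)=\int_0^{t-\varepsilon}f^{\alpha,\kappa}(t-s)\,\dd Y_s^{(n)}$.

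Finally I would let $\varepsilon\to0$. Because $\frac{\dd}{\dd s}Y_s^{(n)}$ is bounded (a finite linear combination of the $\Delta M_i^{(n)}$) and $f^{\alpha,\kappa}(u)\sim\frac{\kappa}{\Gamma(\alpha)}u^{\alpha-1}$ is integrable near $0$ for $\alpha\in(1/2,1)$, the truncated integrals converge to $\int_0^t f^{\alpha,\kappa}(t-s)\,\dd Y_s^{(n)}=\widetilde{\Lambda}_t^{(n)}$, and the convergence is uniform in $t$ since the omitted mass $\int_{t-\varepsilon}^t f^{\alpha,\kappa}(t-s)\,\dd s=\int_0^\varepsilon f^{\alpha,\kappa}(u)\,\dd u=O(\varepsilon^\alpha)$ is controlled uniformly. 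Uniform convergence of the derivatives then permits the interchange $\frac{\dd}{\dd t}\lim_{\varepsilon\to0}=\lim_{\varepsilon\to0}\frac{\dd}{\dd t}$, giving $(\mathcal{G}^{\alpha-1}Y^{(n)})(t)=\widetilde{\Lambda}_t^{(n)}$ for every $t\in[0,1]$.

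I would expect the only genuinely delicate point to be the integration by parts together with the final limit interchange: the kernel derivative satisfies $(f^{\alpha,\kappa})'(u)\sim u^{\alpha-2}$, which is \emph{not} integrable at $0$, so the $\varepsilon$-regularization and the exact cancellation of the boundary term are essential rather than cosmetic. Since $Y^{(n)}$ is only piecewise (not globally) differentiable at the grid points $t_i$, I would take care to invoke absolute continuity rather than the classical fundamental theorem of calculus when integrating by parts; beyond this, no estimates are needed that were not already used in the corresponding argument for the limit process $Y$.
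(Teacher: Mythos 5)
Your proof is correct, but it follows a different route from the paper's. You recycle the $\varepsilon$-regularization argument that the paper uses for the \emph{limit} process $Y$: truncate at $t-\varepsilon$, differentiate via Leibniz, integrate by parts so that the boundary term $Y^{(n)}_{t-\varepsilon}f^{\alpha,\kappa}(\varepsilon)$ cancels, and then justify the interchange $\frac{\dd}{\dd t}\lim_{\varepsilon\to 0}=\lim_{\varepsilon\to 0}\frac{\dd}{\dd t}$ by uniform convergence of the truncated Stieltjes integrals. The paper instead exploits the fact that $Y^{(n)}$ is Lipschitz (piecewise linear) to avoid regularization altogether: it writes
\[
\int_0^t \bigl(Y^{(n)}_s-Y^{(n)}_0\bigr)f^{\alpha,\kappa}(t-s)\,\dd s
=\int_0^t \frac{\dd Y^{(n)}_s}{\dd s}\int_0^{t-s}f^{\alpha,\kappa}(u)\,\dd u\,\dd s
\]
by Fubini, so the singular kernel is replaced by its primitive $\int_0^{t-s}f^{\alpha,\kappa}(u)\,\dd u$, which is continuous and vanishes at $s=t$; the outer $t$-derivative can then be taken term by term over the grid intervals, landing directly on \eqref{tlidelambda}. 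The trade-offs are what you would expect: the paper's argument is shorter and never touches $(f^{\alpha,\kappa})'$, whose non-integrability at the origin is exactly the delicate point you flag, whereas your version has the virtue of structural uniformity with the preceding proposition (one argument covers both the stochastic and the Stieltjes integrator, with the only change being which integration-by-parts formula is invoked). Both hinge on the same two facts — $Y^{(n)}_0=0$ and the piecewise-constant derivative $\theta+\frac{\theta}{\omega_n}\Delta M^{(n)}_{\lf ns\rf}$ — and your identification $\widetilde{\Lambda}^{(n)}_t=\int_0^t f^{\alpha,\kappa}(t-s)\,\dd Y^{(n)}_s$ is exactly the bridge the paper establishes, so the two proofs are interchangeable.
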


\begin{proof}
    It should be noted that $Y^{(n)}$ is actually a rescaled version of the martingale $M^{(n)}$. To ensure its paths belong to a Hölder space, we adopt the linear interpolation
    \[Y_t^{(n)} = \theta t + \frac{\theta}{n\omega_n} \left[ (1 - nt+\lf nt\rf) M_{\lf nt \rf-1}^{(n)} + (nt-\lf nt\rf) M_{\lf nt \rf}^{(n)} \right],\]
    which is piecewise differentiable, and for $t \in (t_i, t_{i+1})$, it holds that
    \[\frac{\dd Y^{(n)}(t)}{\dd t}=\theta + \frac{\theta}{\omega_n} \Delta M_{i}^{(n)}.\]
    By the Assumption \ref{a3}, its paths is Lipschitz continuous, the GFO is well-defined on $Y_t^{(n)}$. Thus, by the definition \eqref{GFO}, 
    \[\begin{split}
        &\int_0^t (Y^{(n)}(s)-Y^{(n)}(0)) f^{\alpha,\kappa}(t-s)\dd s=\int_0^t \int_0^{t-s} f^{\alpha,\kappa}(u)\dd u\frac{\dd (Y^{(n)}(s)-Y^{(n)}(0))}{\dd s}\dd s\\
        &=\sum_{i=0}^{\lf nt\rf-1}\left(\theta + \frac{\theta}{\omega_n} \Delta M_{i}^{(n)}\right)\int_{t_i}^{t_{i+1}}\int_0^{t-s} f^{\alpha,\kappa}(u)\dd u\dd s+\left(\theta + \frac{\theta}{\omega_n} \Delta M_{\lf nt\rf}^{(n)}\right)\int_{t_{\lf nt\rf}}^t\int_0^{t-s} f^{\alpha,\kappa}(u)\dd u\dd s. 
    \end{split}\]
    Thus, by the definition \eqref{GFO}, 
    \[\begin{split}
        &(\mathcal{G}^{\alpha-1} Y^{(n)})(t)=\frac{\dd }{\dd t}\int_0^t (Y^{(n)}(s)-Y^{(n)}(0)) f^{\alpha,\kappa}(t-s)\dd s\\
        &=\sum_{i=0}^{\lf nt\rf-1}\left(\theta + \frac{\theta}{\omega_n} \Delta M_{i}^{(n)}\right)\int_{t_i}^{t_{i+1}}f^{\alpha,\kappa}(t-s)\dd s+\left(\theta + \frac{\theta}{\omega_n} \Delta M_{\lf nt\rf}^{(n)}\right)\int_{t_{\lf nt\rf}}^{t}f^{\alpha,\kappa}(t-s)\dd s. 
    \end{split}\]
\end{proof}

The second step, we prove that $Y^{(n)}\Rightarrow Y$ in H\"{o}lder space $\mathcal{C}^{\lambda}$ for $\lambda<1/2$. The Donsker invariance principle for convergence in H\"{o}lder spaces was first established by \cite{lamperti1962convergence}. \cite{ravckauskas2004necessary} provided necessary and sufficient conditions for the convergence of i.i.d. partial sum processes in H\"{o}lder spaces, while results for martingale difference sequences were derived by \cite{giraudo2016holderian} from a dynamical systems perspective. However, these results concern only the scaling limits of a single sequence and do not directly apply to triangular arrays. Actually, FCLT for martingale difference arrays is well established in the Skorokhod space \citep{hall2014martingale}. Consequently, once tightness in the H\"{o}lder space is verified, the desired convergence in this space follows immediately. 

\begin{proposition}\label{weak_converge}
    For any $\lambda<1/2$, $Y^{(n)}$ converges weakly to $Y$ in H\"{o}lder space $\mathcal{C}^{\lambda}([0,1])$. 
\end{proposition}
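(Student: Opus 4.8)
The plan is to establish weak convergence in $\mathcal{C}^{\lambda}([0,1])$ through the two standard ingredients for convergence in a Polish space: convergence of finite-dimensional distributions together with tightness in the Hölder topology. First I would strip off the common deterministic drift: since $\theta t$ appears identically in $Y^{(n)}$ and in $Y=\theta t+\nu\gamma W$ and belongs to every Hölder space, it suffices to treat the rescaled, polygonally interpolated martingale $\bar M^{(n)}_t:=\frac{\theta}{n\omega_n}[(1-nt+\lf nt\rf)M^{(n)}_{\lf nt\rf-1}+(nt-\lf nt\rf)M^{(n)}_{\lf nt\rf}]$ and show $\bar M^{(n)}\Rightarrow\nu\gamma W$. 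The key scaling observation is that Assumption \ref{a2} gives $\sqrt{n}\,\omega_n\to\theta/\nu$, hence $\frac{\theta}{n\omega_n}\sim\frac{\nu}{\sqrt{n}}$, so $\bar M^{(n)}$ is a $1/\sqrt{n}$-normalised partial-sum process of the stationary martingale differences $\Delta M^{(n)}_k$.

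For the finite-dimensional convergence I would invoke a functional martingale central limit theorem. Writing $\Delta\bar M^{(n)}_k=\frac{\theta}{n\omega_n}\Delta M^{(n)}_k$, its conditional variance is $(\frac{\theta}{n\omega_n})^2 U^2(\lambda^{(n)}_k)\sim\frac{\nu^2}{n}U^2(\lambda^{(n)}_k)$. The crucial point is that the mean level of $\lambda$ behaves like $\omega_n/(1-a_n)\sim n^{\alpha-1/2}\to\infty$ because $\alpha>1/2$; therefore $\lambda^{(n)}_k\to\infty$ and Assumption \ref{a1} forces $U(\lambda^{(n)}_k)\to\gamma$, so that $\sum_{k\le\lf nt\rf}\mathbb{E}[(\Delta\bar M^{(n)}_k)^2\mid\mathcal{F}_{k-1}]\to(\nu\gamma)^2 t$ in probability. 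The conditional Lindeberg condition is immediate from Assumption \ref{a3}: since $|\Delta\bar M^{(n)}_k|=O(n^{-1/2})|\Delta M^{(n)}_k|$ with all moments uniformly bounded, the truncated second moments are $O(n^{1-(p-2)/2})\to0$ for any $p>4$. This identifies the limit of the finite-dimensional distributions as those of $\nu\gamma W$.

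The \emph{main obstacle} is tightness in the Hölder topology, which is genuinely stronger than Skorohod tightness and demands a uniform modulus-of-continuity control. I would fix $\lambda'\in(\lambda,1/2)$ and exploit the compact embedding $\mathcal{C}^{\lambda'}([0,1])\hookrightarrow\mathcal{C}^{\lambda}([0,1])$: it suffices to show $\sup_n\mathbb{E}\|\bar M^{(n)}\|_{\lambda'}^p<\infty$ for some large $p$, since then the laws are supported, up to small mass, on balls of $\mathcal{C}^{\lambda'}$ that are compact in $\mathcal{C}^{\lambda}$. By a Kolmogorov--Chentsov bound for the Hölder norm this reduces to an increment estimate $\mathbb{E}|\bar M^{(n)}_t-\bar M^{(n)}_s|^p\le C|t-s|^{1+\beta}$ with $\beta/p>\lambda'$. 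Such an estimate follows from the Burkholder--Davis--Gundy inequality combined with stationarity: $\mathbb{E}|M^{(n)}_{\lf nt\rf}-M^{(n)}_{\lf ns\rf}|^p\le C_p\,\mathbb{E}(\sum_{\lf ns\rf<k\le\lf nt\rf}(\Delta M^{(n)}_k)^2)^{p/2}\le C_p(n|t-s|)^{p/2}$ by Minkowski in $L^{p/2}$ and the uniform $p$-th moment bound, so after the $1/\sqrt{n}$ scaling one obtains $\mathbb{E}|\bar M^{(n)}_t-\bar M^{(n)}_s|^p\le C|t-s|^{p/2}$ (the within-cell linear pieces are handled identically, using that a single slope $\theta+\frac{\theta}{\omega_n}\Delta M^{(n)}_i$ contributes $O(n^{p/2})|t-s|^p$ on intervals of length $\le 1/n$). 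Here $\beta=p/2-1$ and $\beta/p=1/2-1/p$, so $\beta/p>\lambda'$ amounts to $p>1/(1/2-\lambda')$. This is precisely where the full strength of Assumption \ref{a3} — uniform boundedness of \emph{all} moments $p\ge2$ — is needed: it lets $p\to\infty$ and thereby reaches every $\lambda<1/2$. Combining the finite-dimensional convergence with this tightness gives $\bar M^{(n)}\Rightarrow\nu\gamma W$, and adding back the common drift $\theta t$ yields $Y^{(n)}\Rightarrow Y$ in $\mathcal{C}^{\lambda}([0,1])$. Alternatively, one could cite the Hölderian invariance principle for stationary martingale differences of \citet{giraudo2016holderian} directly, after verifying its moment hypotheses via the same BDG computation.
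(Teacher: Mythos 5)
Your proposal is correct in its essential structure, but it takes a genuinely different route from the paper. The paper does exactly what your closing sentence suggests as an alternative: it writes $Y^{(n)}_t$ as $\theta t$ plus the normalised interpolated partial-sum process and then cites Theorem~2.2 of \citet{giraudo2016holderian}, whose two hypotheses --- the weak-$L^p$ tail condition $t^p\,\mb{P}(|\Delta M_k^{(n)}|>t)\to 0$ and $\mb{E}[|\Delta M_k^{(n)}|^2\,|\,\mathcal{F}_{k-1}]\in L^{p/2}$ --- it checks via Markov's and Jensen's inequalities from Assumption~\ref{a3}; tightness in the H\"older topology is thereby delegated entirely to the cited theorem, and taking $p>2$ arbitrary covers every $\lambda<1/2$. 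You instead give a self-contained Lamperti-style argument: a martingale CLT for the finite-dimensional distributions, plus H\"older tightness via BDG/Kolmogorov increment bounds and the compact embedding $\mathcal{C}^{\lambda'}\hookrightarrow\mathcal{C}^{\lambda}$. What your route buys is transparency --- in particular it makes explicit that the availability of \emph{all} moments in Assumption~\ref{a3} is what lets $p\to\infty$ and reaches every $\lambda<1/2$, and it handles the within-cell interpolation pieces correctly; what the paper's route buys is brevity and the fact that Giraudo's theorem only requires identifying $\sigma^2=\lim_n n^{-1}\mb{E}\bigl[(\sum_i\Delta M_i^{(n)})^2\bigr]$, i.e.\ a convergence of \emph{means}, whereas your direct martingale CLT needs the conditional variances $\frac{1}{n}\sum_{k\le \lf nt\rf}U^2(\lambda_k^{(n)})$ to converge \emph{in probability} to $\gamma^2 t$. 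That is the one soft spot in your write-up: the inference ``the mean level $\omega_n/(1-a_n)\to\infty$, hence $\lambda_k^{(n)}\to\infty$, hence $U(\lambda_k^{(n)})\to\gamma$'' is heuristic (a diverging mean does not by itself control the fluctuating path), though in fairness the paper makes essentially the same unargued step when it asserts $\frac{1}{n}\sum_i\mb{E}[U^2(\lambda_i^{(n)})]\to\gamma^2$. Two further minor points: your Lindeberg exponent should be $O(n^{1-p/2})$ (valid for any $p>2$), not $O(n^{1-(p-2)/2})$, which is immaterial here; and both proofs share the unacknowledged technicality that $\{\Delta M^{(n)}_k\}_k$ is a triangular array rather than a single stationary sequence, which Giraudo's theorem formally addresses only for the latter.
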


\begin{proof}
    See \ref{proof:weak_converge}
\end{proof}

Subsequently, we establish the continuity of the operator $\mathcal{G}^{\alpha-1}$, thereby enabling the application of the continuous mapping theorem. Indeed, this result is encompassed within Proposition 2.2 of \cite{horvath2024functional}, where a proof of continuity for general GFOs is provided. We directly apply their result to obtain the following proposition. 

\begin{proposition}
    For any $\alpha \in (1/2, 1)$, there exists $\lambda \in (1 - \alpha, 1/2)$ such that the operator $\mathcal{G}^{\alpha}$ is continuous from $\mathcal{C}^\lambda([0,1])$ to $\mathcal{C}^{\lambda+\alpha-1}([0,1])$. 
\end{proposition}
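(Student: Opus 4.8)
The plan is to reduce the statement to the general continuity theorem for generalized fractional operators, namely Proposition 2.2 of \cite{horvath2024functional}, which asserts that for $\lambda\in(0,1)$ and an admissible order $\beta\in(-\lambda,1-\lambda)$, the operator $\mathcal{G}^{\beta}$ associated to a kernel $g\in\mathcal{L}^{\beta}$ is bounded, hence continuous, from $\mathcal{C}^{\lambda}([0,1])$ into $\mathcal{C}^{\lambda+\beta}([0,1])$. Taking $\beta=\alpha-1$ produces exactly the target image space $\mathcal{C}^{\lambda+\alpha-1}$, so the entire task is to verify the two hypotheses of that theorem for the specific kernel $g=f^{\alpha,\kappa}$ (noting that the operator appearing here is $\mathcal{G}^{\alpha-1}$, consistent with the diagram and with \eqref{GFO}).

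First I would verify the kernel membership $f^{\alpha,\kappa}\in\mathcal{L}^{\alpha-1}$. Using the series representation $f^{\alpha,\kappa}(u)=\sum_{j\geq1}\frac{(-1)^{j-1}\kappa^{j}}{\Gamma(\alpha j)}\,u^{\alpha j-1}$, which converges for all $u>0$ and may be differentiated term by term on $(0,1]$, the leading $j=1$ contributions give $f^{\alpha,\kappa}(u)\sim \tfrac{\kappa}{\Gamma(\alpha)}u^{\alpha-1}$, $(f^{\alpha,\kappa})'(u)\sim \tfrac{\kappa(\alpha-1)}{\Gamma(\alpha)}u^{\alpha-2}$ and $(f^{\alpha,\kappa})''(u)\sim\tfrac{\kappa(\alpha-1)(\alpha-2)}{\Gamma(\alpha)}u^{\alpha-3}$ as $u\to 0^{+}$, while every higher-order term carries a strictly smaller singularity (orders $2\alpha-1$, $2\alpha-2$, $2\alpha-3$, and so on). Consequently the three quotients $f^{\alpha,\kappa}(u)/u^{\alpha-1}$, $(f^{\alpha,\kappa})'(u)/u^{\alpha-2}$, $(f^{\alpha,\kappa})''(u)/u^{\alpha-3}$ each extend continuously, hence boundedly, to $[0,1]$, which is precisely the defining condition for membership in $\mathcal{L}^{\alpha-1}$.

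Second I would check the index constraints. The admissibility condition $\alpha-1\in(-\lambda,1-\lambda)$ is equivalent to $\lambda\in(1-\alpha,2-\alpha)$; intersecting this with the range $\lambda<1/2$ dictated by the Hölder convergence $Y^{(n)}\Rightarrow Y$ established in the previous step, and observing that $\alpha\in(1/2,1)$ forces $1-\alpha<1/2<2-\alpha$, yields the nonempty admissible window $\lambda\in(1-\alpha,1/2)$. For any such $\lambda$ the output exponent $\lambda+\alpha-1$ lies in $(0,\alpha-1/2)\subset(0,1/2)$, confirming that the image is a genuine (rough) Hölder space. Applying \cite[Proposition 2.2]{horvath2024functional} with $\beta=\alpha-1$ and $g=f^{\alpha,\kappa}$ then delivers the asserted continuity of $\mathcal{G}^{\alpha-1}\colon\mathcal{C}^{\lambda}\to\mathcal{C}^{\lambda+\alpha-1}$.

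Since the heavy lifting---the quantitative bound $\|\mathcal{G}^{\beta}f\|_{\lambda+\beta}\lesssim\|f\|_{\lambda}$---is already supplied by the cited general theorem, there is no genuine obstacle in the operator continuity itself. The only point requiring real care is the kernel verification: one must confirm not merely that $f^{\alpha,\kappa}$ behaves like the Riemann--Liouville kernel $u^{\alpha-1}$ near the origin, as was already noted heuristically in Section~\ref{sec:long}, but that its first two derivatives inherit the matching singular orders $u^{\alpha-2}$ and $u^{\alpha-3}$ uniformly on $(0,1]$, so that $f^{\alpha,\kappa}$ genuinely sits in $\mathcal{L}^{\alpha-1}$ and the result of \cite{horvath2024functional} applies verbatim with no modification.
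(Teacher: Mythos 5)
Your proposal is correct and takes essentially the same route as the paper: both reduce the statement to a direct application of Proposition 2.2 of \cite{horvath2024functional} with kernel $g=f^{\alpha,\kappa}$ and order $\beta=\alpha-1$. In fact your write-up is more complete than the paper's, which simply asserts $f^{\alpha,\kappa}\in\mathcal{L}^{\alpha-1}$ and cites the result, whereas you verify the kernel membership via the Mittag-Leffler series and check that the admissibility window $\lambda\in(1-\alpha,1/2)$ is nonempty.
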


This continuity together with the fact $Y^{(n)} \Rightarrow Y$, we can obtain that $\mathcal{G}^{\alpha-1} Y^{(n)} \Rightarrow \mathcal{G}^{\alpha-1} Y$ by continuous mapping theorem. From the above representations, this entails
\[\widetilde{\Lambda}_t^{(n)}\Rightarrow \Lambda_t \ \text{in}\ \mathcal{C}^{\lambda+\alpha-1}([0,1]). \]
According to Lemma 3.10 in \cite{horvath2024functional}, we can readily extend this weak convergence to the continuous function space $\mathcal{C}([0,1])$ and Skorokhod space $\mathcal{D}([0,1])$. 

We have established the convergence of $\widetilde{\Lambda}_t^{(n)}$. However, our primary process of interest is $\Lambda_t^{(n)}$ given by \eqref{Lambda_main}, with $\widetilde{\Lambda}_t^{(n)}$ serving merely as an auxiliary process. Finally, we will demonstrate that the two processes are asymptotically indistinguishable. By virtue of Theorem 3.1 in \cite{billingsley2013convergence}, it follows that $\Lambda_t^{(n)} \Rightarrow \Lambda_t$.

\begin{proposition}\label{indistinguishable}
    The two processes $\Lambda_t^{(n)}$ and $\widetilde{\Lambda}_t^{(n)}$ are asymptotically indistinguishable in $\mathcal{C}([0,1])$, that is, 
    \[\sup_{t\in[0,1]}\left|\Lambda_t^{(n)}-\widetilde{\Lambda}_t^{(n)}\right|\Rightarrow 0.\]
\end{proposition}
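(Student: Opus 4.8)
The plan is to decompose both processes into drift and martingale parts and exploit the structural fact that $\Lambda^{(n)}_t=\frac{(1-a_n)\theta}{\omega_n}\lambda^{(n)}_{\lf nt\rf}$ depends on $t$ only through $\lf nt\rf$, hence is a step function that is constant on each $[t_m,t_{m+1})$, whereas $\widetilde\Lambda^{(n)}_t=(\mathcal G^{\alpha-1}Y^{(n)})_t$ is continuous. Write $\Lambda^{(n)}_t=D^{(n)}_t+N^{(n)}_t$ and $\widetilde\Lambda^{(n)}_t=\widetilde D^{(n)}_t+\widetilde N^{(n)}_t$ for the drift/martingale splittings read off from \eqref{Lambda_main} and \eqref{tlidelambda}. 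The drift difference is purely deterministic: $\widetilde D^{(n)}_t=\theta\int_0^t f^{\alpha,\kappa}(u)\,\dd u$ exactly, while $D^{(n)}_t=(1-a_n)\theta+\theta a_n F_n(t)$, which by the uniform (Dini) convergence \eqref{uniform} and $a_n\uparrow 1$ tends uniformly to the same limit; thus $\sup_t|D^{(n)}_t-\widetilde D^{(n)}_t|\to0$ with no probability involved. Using the step structure, $\sup_{t}|\Lambda^{(n)}_t-\widetilde\Lambda^{(n)}_t|\le \max_{0\le m\le n}|\Lambda^{(n)}_{t_m}-\widetilde\Lambda^{(n)}_{t_m}|+\max_m\sup_{t\in[t_m,t_{m+1}]}|\widetilde\Lambda^{(n)}_t-\widetilde\Lambda^{(n)}_{t_m}|$. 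The oscillation term is controlled by the weak convergence $\widetilde\Lambda^{(n)}\Rightarrow\Lambda$ in $\mathcal C^{\lambda+\alpha-1}$ established above: the family $\{\|\widetilde\Lambda^{(n)}\|_{\lambda+\alpha-1}\}$ is tight, so it is bounded by $\|\widetilde\Lambda^{(n)}\|_{\lambda+\alpha-1}\,n^{-(\lambda+\alpha-1)}\to0$ in probability since $\lambda+\alpha-1>0$. Hence everything reduces to $\max_m|\Lambda^{(n)}_{t_m}-\widetilde\Lambda^{(n)}_{t_m}|$, and since the drift part is uniformly small, to $\max_m|\Delta^{(n)}_{t_m}|$ where $\Delta^{(n)}_{t_m}:=N^{(n)}_{t_m}-\widetilde N^{(n)}_{t_m}$.

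At a grid point $t=t_m$ the boundary integral $\int_{t_{\lf nt\rf}}^t$ in \eqref{tlidelambda} vanishes, so both martingale sums run over $j=0,\dots,m-1$ and $\Delta^{(n)}_{t_m}=\frac{\theta}{\omega_n}\sum_{j=0}^{m-1}w^{(n)}_{m,j}\Delta M_j^{(n)}$ is a single martingale transform with deterministic weights $w^{(n)}_{m,j}=(1-a_n)\psi^{(n)}_{m-j}-\int_{t_j}^{t_{j+1}}f^{\alpha,\kappa}(t_m-s)\,\dd s$. By orthogonality of the martingale differences and the moment bound of Assumption \ref{a3}, $\mb E|\Delta^{(n)}_{t_m}|^2\lesssim\frac{\theta^2}{\omega_n^2}\sum_{j=0}^{m-1}(w^{(n)}_{m,j})^2$. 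Recalling $(1-a_n)\psi^{(n)}_k=\frac{a_n}{n}\rho_n(k/n)$ and $\frac{\theta}{\omega_n}\sim\theta\sqrt n$, the leading contribution is of order $\theta^2\int_0^{t_m}(\rho_n-f^{\alpha,\kappa})^2$, which vanishes by the $L^2$-convergence of Proposition \ref{L2convergence}; the Riemann-sum error between $\frac1n\rho_n(k/n)$ and $\int_{t_j}^{t_{j+1}}f^{\alpha,\kappa}$, the factor $(1-a_n)\to0$, and the isolated singular term near $k=1$ are each $O(n^{1-2\alpha})$, hence negligible precisely because $\alpha>1/2$ renders $f^{\alpha,\kappa}$ square-integrable at the origin. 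This gives $\sup_{0\le m\le n}\mb E|\Delta^{(n)}_{t_m}|^2\to0$.

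\textbf{The main obstacle is the passage from this grid-wise $L^2$ smallness to the maximum} $\max_{0\le m\le n}|\Delta^{(n)}_{t_m}|$. A naive union bound over the $O(n)$ grid points, even equipped with the higher-moment estimate $\mb E|\Delta^{(n)}_{t_m}|^p\lesssim\big(\tfrac{\theta^2}{\omega_n^2}\sum_j (w^{(n)}_{m,j})^2\big)^{p/2}$ obtained from the Burkholder--Davis--Gundy inequality and the arbitrary-order moments of Assumption \ref{a3}, would require $\|\rho_n-f^{\alpha,\kappa}\|_2$ to decay faster than $n^{-1/p}$ — a rate Proposition \ref{L2convergence} does not supply. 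Equivalently, a time-averaged bound $\mb E\int_0^1|\Delta^{(n)}_t|^2\,\dd t\to0$ follows easily (the interval length $1/n$ cancels the sum over $m$), but upgrading it to a uniform bound reintroduces the lossy factor $\sqrt n$. This decoupling of a growing grid from rateless kernel convergence is the genuine difficulty.

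To close the gap I would bypass the union bound by viewing $\Delta^{(n)}$ as a stochastic convolution $\int_0^t K^{(n)}(t,s)\,\dd\mathbb M^{(n)}_s$ of the interpolated martingale $\mathbb M^{(n)}$ (a rescaling of $M^{(n)}$) against the kernel difference $K^{(n)}$, and apply a factorization (stochastic-Fubini) maximal inequality for stochastic convolutions with fractional kernels, which bounds $\mb E\sup_t|\Delta^{(n)}_t|^p$ by a space--time $L^p$ norm of $K^{(n)}$ without the $\sqrt n$ loss; the extra kernel regularity this demands is exactly what $\alpha>1/2$ furnishes, and the large $p$ comes from Assumption \ref{a3}. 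Since that space--time norm is again governed by $\|\rho_n-f^{\alpha,\kappa}\|_2\to0$, this yields $\sup_t|\Delta^{(n)}_t|\Rightarrow0$, and combined with the uniform drift convergence and the oscillation control it proves $\sup_{t\in[0,1]}|\Lambda^{(n)}_t-\widetilde\Lambda^{(n)}_t|\Rightarrow0$. Alternatively, one may extract an explicit decay rate for $\|\rho_n-f^{\alpha,\kappa}\|_2$ from the Fourier estimates of Lemma \ref{philemma} (refining the pointwise convergence $\hat\rho_n\to\hat f^{\alpha,\kappa}$ to a quantitative $L^2$ statement) and feed it into the union bound; either route resolves the crux identified above.
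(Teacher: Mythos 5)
Your decomposition coincides, in substance, with the paper's own proof: your drift comparison is exactly the paper's terms $I_1+I_2$ (handled by \eqref{uniform} and Dini's theorem), your oscillation/boundary control plays the role of the paper's $I_3$, and your grid-point martingale term $\Delta^{(n)}_{t_m}=\frac{\theta}{\omega_n}\sum_{j<m}w^{(n)}_{m,j}\Delta M^{(n)}_j$, estimated through orthogonality of the martingale differences, stationarity, and Proposition \ref{L2convergence}, is precisely the paper's treatment of $I_4$. Up to the statement $\sup_m \mb{E}|\Delta^{(n)}_{t_m}|^2\lesssim \theta^2\|\rho_n-f^{\alpha,\kappa}\|_2^2\to 0$, your argument and the paper's are the same, and your executed computations are correct.

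The gap is the step you yourself flag as ``the main obstacle'': passing from grid-wise $L^2$ smallness to $\max_{0\le m\le n}|\Delta^{(n)}_{t_m}|\Rightarrow 0$. Your diagnosis of this obstacle is accurate — the sums $S_m=\sum_{j<m}w^{(n)}_{m,j}\Delta M^{(n)}_j$ form a stochastic convolution, not a martingale in $m$ (the weights depend on $m$), so Doob/Kolmogorov maximal inequalities do not apply, and a union bound over the $n$ grid points needs a polynomial decay rate for $\|\rho_n-f^{\alpha,\kappa}\|_2$ that Proposition \ref{L2convergence} does not supply. But your proposal does not actually close this gap: the ``factorization (stochastic-Fubini) maximal inequality'' is invoked without a precise statement or verification of its hypotheses (such inequalities are typically stated for convolution against a fixed singular kernel, whereas here the whole point is the smallness of the difference kernel $K^{(n)}$), and the alternative route — extracting a quantitative rate from the Fourier estimates of Lemma \ref{philemma} and combining it with the arbitrarily high moments of Assumption \ref{a3}, for which a rate $\|\rho_n-f^{\alpha,\kappa}\|_2=o(n^{-1/p})$ for some $p$ would suffice — is announced but not carried out. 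So as written the proposal is an incomplete proof. It is worth noting, however, that the paper's own proof does not resolve this point either: it bounds $\mb{P}(\sup_k|S_k|\ge\varepsilon)$ directly by $\varepsilon^{-2}$ times a single second moment, which implicitly treats $S_k$ as a martingale in $k$ and applies a maximal inequality that is not available for $k$-dependent weights. Your proposal thus reproduces the paper's argument where it is sound and isolates, more honestly than the paper, exactly where additional work (most plausibly your second, quantitative-rate route) is needed.
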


\begin{proof}
    See \ref{proof:indistinguishable}
\end{proof}

\section{Specific examples for volatility models}\label{sec:examples}
\subsection{Gamma-GED-EGARCH($\infty$) model}
When we use the score-driven model to characterize volatility, we focus on the following observed time series:
\[y_n=\sqrt{\varphi(\lambda_n)}\varepsilon_n, \quad \varepsilon_n|\mathcal{F}_{n-1}\stackrel{d}{\sim}\text{density} \ f(\cdot).\]
If $\varepsilon_n$ has unit variance, $\sqrt{\varphi(\lambda_n)}$ is in fact the conditional volatility of $y_n$. Here, the function $\varphi: \mb{R}\to\mb{R}^+$ is monotonic and differentiable, referred to as the link function. Moreover, if $\varphi$ is the identity mapping (restricted to $\mb{R}^+$), then the time-varying parameter $\lambda_n$ directly characterizes the conditional variance of $y_n$. According to \cite{buccheri2021continuous}, the score in this case can be expressed as 
\[\nabla_n=\frac{-\varphi'(\lambda_n)}{2\varphi(\lambda_n)}\left[1+\frac{f'\left(\frac{y_n}{\sqrt{\varphi(\lambda_n)}}\right)}{f\left(\frac{y_n}{\sqrt{\varphi(\lambda_n)}}\right)}\frac{y_n}{\sqrt{\varphi(\lambda_n)}}\right].\]

In this section, we set $\varphi(x)=\ee^{2x}$, which implies that $\lambda_n:=\ln \sigma_n$ is the log-volatility. We assume that $f$ is the density of Generalized Error Distribution (GED) with parameter $\nu$, that is
\[f(x)=\frac{1}{2^{1+1/\nu}\Gamma(1+1/\nu)}\exp\left(-\frac{|x|^\nu}{2}\right).\]
In this case, the score is given by
\[\nabla_n=\frac{\nu}{2}\left|\frac{y_n}{\ee^{\lambda_n}}\right|^\nu-1=\frac{\nu}{2}|\varepsilon_n|^\nu-1.\]
Since the Fisher information is a constant, we set $S(\lambda_n)=1$. Thus, the dynamic of $\lambda_n$ is given by
\[\lambda_{n} = \omega + \sum_{i=1}^{n} \phi_{i} \left[\lambda_{n-i} + \frac{\nu}{2}|\varepsilon_{n-i}|^\nu-1\right].\]
Following the terminology in \cite{harvey2017volatility}, we refer to this model as the Gamma-GED-EGARCH($\infty$). When $\nu=2$, the GED reduces to standard normal distribution, and this model corresponds to the EGARCH model in \cite{nelson1991conditional} but without asymmetry. Based on the previous definitions, we define the following sequence scaling process:
\begin{equation}\label{gammaged}
    \begin{split}
        &X_t^{(n)}=\frac{1}{\sqrt{\chi n}}\sum_{i=1}^{\lf nt\rf}\left[\exp(\lambda_i^{(n)})\right]^{\frac{1-a_n}{\omega_n}\theta}\varepsilon_i, \quad \varepsilon_i|\mathcal{F}_{i-1}\stackrel{d}{\sim} \ \text{GED}(\nu),\\
        &\Lambda_t^{(n)}:=\frac{1-a_n}{\omega_n}\theta\lambda^{(n)}_{\lf nt\rf}, \quad \lambda_{t}^{(n)} = \omega_n + \sum_{i=1}^{t} \phi_i^{(n)} \left[\lambda_{t-i}^{(n)} + \frac{\nu}{2}|\varepsilon_{n-i}|^\nu-1\right],
    \end{split}
\end{equation}
where $\phi_i\sim K/i^{1+\alpha}$, $1-a_n\sim n^{-\alpha}$, $\omega_n\sim n^{-1/2}$, $\chi =\mb{E}(\varepsilon^2)=2^{2/\nu}\frac{\Gamma(3/\nu)}{\Gamma(1/\nu)}$.

\begin{theorem}\label{example_theorem}
    For any $\alpha\in(1/2,1)$, as $n\to\infty$, the pair $(X_t^{(n)},\Lambda_t^{(n)})$ defined in \eqref{gammaged} converges weakly to $(X_t,\Lambda_t)$ in the Skorohod space, 
    \begin{equation}\label{roughvol}
        \begin{split}
        &X_t=\int_{0}^t \ee^{\Lambda_s} \dd B_s,\\
        &\Lambda_t=\frac{1}{\Gamma(\alpha)}\int_{0}^t (t-s)^{\alpha-1}\kappa(\theta-\Lambda_s)\dd s+\frac{\sqrt{\nu}\kappa\mu}{\Gamma(\alpha)}\int_0^t (t-s)^{\alpha-1}\dd W_s.
        \end{split}
    \end{equation}
    Where $B_t, W_t$ are independent Brownian motions, $\kappa=\lim_{n\to\infty}\frac{\alpha n^\alpha(1-a_n)}{K\Gamma(1-\alpha)}$, $\mu=\lim_{n\to\infty}\frac{\theta}{\sqrt{n}\omega_n}$.
\end{theorem}
\begin{proof}
    See \ref{proof:example_theorem}
\end{proof}

The limit \eqref{roughvol} is actually a rough volatility model, where $\Lambda_t$ characterizes the log-volatility of assets log-price $X_t$. Note that $\Lambda_t$ follows a rough OU process but is different with that in \cite{gatheral2018volatility} which characterizes log-volatility with a OU process driven by rough fractional Brownian motion. Because the kernel $(t-s)^{\alpha-1}$ not only emerge in noise but also in drift term, this specification is analogous with rough CIR in \cite{jaisson2016rough} or rough Heston in \cite{el2019characteristic}.

\subsection{A extension as rough volatility approximations}
Before \eqref{roughvol} can be considered a ``useful'' rough volatility model, there are still two important issues we need to deal with. First, the initial value of $(X_t, \Lambda_t)$ is zero in \eqref{roughvol}; second, the two Brownian motions are independent, which prevents the model from capturing the leverage effect in the market. 

For the first issue, we adopt the strategy in \cite{wang2025rough}, add a baseline in the dynamic of $\lambda_t^{(n)}$, that is
\[\lambda_{t}^{(n)} = \omega_t^{(n)} + \sum_{i=1}^{t} \phi_i^{(n)} \left[\lambda_{t-i}^{(n)} + \frac{\nu}{2}|\varepsilon_{t-i}|^\nu-1\right],\]
where
\[\omega_t^{(n)}=\omega_n+\xi\omega_n\left(\frac{1}{1-a_n}\left(1-\sum_{s=1}^{t-1}\phi_s^{(n)}\right)-\sum_{s=1}^{t-1}\phi_s^{(n)}\right),\]
with $\xi>0$. It result the limit $\Lambda_t$ becomes
\[\Lambda_t=\theta\xi+\frac{1}{\Gamma(\alpha)}\int_{0}^t (t-s)^{\alpha-1}\kappa(\theta-\Lambda_s)\dd s+\frac{\sqrt{\nu}\kappa\mu}{\Gamma(\alpha)}\int_0^t (t-s)^{\alpha-1}\dd W_s.\]

For the second issue, \cite{wu2024continuous} find that the quasi-score of \cite{blasques2023quasi} is the key to generate the correlation in two Brownians of the limit. Therefore, we can consider the score of some asymmetric distribution rather than original GDE$(\nu)$. We set $\varepsilon\sim\text{GED}(2)=\mathcal{N}(0,1)$ for simply to Monte Carlo \footnote{Because their limit forms are the same with \eqref{roughvol} up to some constant diffusion coefficient}, and use the score of density $q(\lambda, y)=\ee^{-\lambda/2}g(y\ee^{-\lambda/2})$, where
\[g(x)=\frac{(\rho+\zeta)^2}{2}|x|\ee^{-\rho x-\zeta|x|}.\]
In the framework of qausi-score driven model, it recovers the asymmetric structure analogous with the EGARCH model, that is
\[\lambda_{t}^{(n)} = \omega_n + \sum_{i=1}^{t} \phi_i^{(n)} \left[\lambda_{t-i}^{(n)} + \rho \varepsilon_{t-i}+\zeta \left(|\varepsilon_{t-i}|-\sqrt{2/\pi}\right)\right].\]

In summary, we consider the following sequence scaling qausi-score driven long memory volatility model,
\begin{equation}\label{qsdlong}
    \begin{split}
        &X_t^{(n)}=X_0-\frac{1}{2n}\sum_{i=1}^{\lf nt\rf}\left[\exp(2\lambda_i^{(n)})\right]^{\frac{1-a_n}{\omega_n}\theta}+\frac{1}{\sqrt{n}}\sum_{i=1}^{\lf nt\rf}\left[\exp(\lambda_i^{(n)})\right]^{\frac{1-a_n}{\omega_n}\theta}\varepsilon_i, \quad \varepsilon_i|\mathcal{F}_{i-1}\stackrel{d}{\sim} \ \mathcal{N}(0,1),\\
        &\Lambda_t^{(n)}:=\frac{1-a_n}{\omega_n}\theta\lambda^{(n)}_{\lf nt\rf}, \quad \lambda_{t}^{(n)} = \omega_t^{(n)} + \sum_{i=1}^{t} \phi_i^{(n)} \left[\lambda_{t-i}^{(n)} + \rho \varepsilon_{t-i}+\zeta \left(|\varepsilon_{t-i}|-\sqrt{2/\pi}\right)\right].
    \end{split}
\end{equation}
It can be easily verified that the limit of \eqref{qsdlong} is the following rough volatility model: 
\begin{equation}\label{newroughvol}
        \begin{split}
        &X_t=X_0-\frac{1}{2}\int_{0}^t \ee^{2\Lambda_s}\dd s+\int_{0}^t \ee^{\Lambda_s} \dd B_s,\\
        &\Lambda_t=\theta\xi+\frac{1}{\Gamma(\alpha)}\int_{0}^t (t-s)^{\alpha-1}\kappa(\theta-\Lambda_s)\dd s+\frac{\sqrt{\rho^2+\zeta^2}\kappa\mu}{\Gamma(\alpha)}\int_0^t (t-s)^{\alpha-1}\dd W_s,\\
        &\text{Cov}(B_t, W_t)=\rho t.
        \end{split}
\end{equation}

\subsection{Numerical simulation}
Based on the convergence result, we will simulate the time series \eqref{qsdlong} to approximate rough volatility model \eqref{newroughvol}. Accordingly, the simulated paths can be employed to price a variety of options via Monte Carlo methods. In this part, the parameter choices are aligned with those adopted in prior numerical studies, such as \cite{callegaro2021fast}, \cite{ma2022fast} and \cite{wang2025rough}, these values are:
\[\begin{split}
    &X_0=\log 100,\quad \Lambda_0=\log \sqrt{0.0392}, \quad \rho=-0.681,\\
    &\kappa=0.1, \quad \theta=\log \sqrt{0.3156}, \quad \sqrt{\rho^2+\zeta^2}\mu=0.331, \quad \alpha=0.62.
\end{split}\]
Note that the value of $\Lambda_0$ and $\theta$ implies that volatility is begin with $\sqrt{0.0392}\approx 19.80\%$, and the long trem level is $\sqrt{0.3156}\approx 56.18\%$. To visualize the volatility behavior, we first simulate Equation \eqref{qsdlong} over a time horizon $T = 5$ with $n = 1000$, varying the roughness parameter $\alpha$. Figure \ref{fig:volatility} shows that the path of the volatility $\sigma_t=\exp(\Lambda_t)$ by simulating \eqref{qsdlong} for different $\alpha$. 
\begin{figure}[htbp]
    \centering
    \includegraphics[width=1\textwidth]{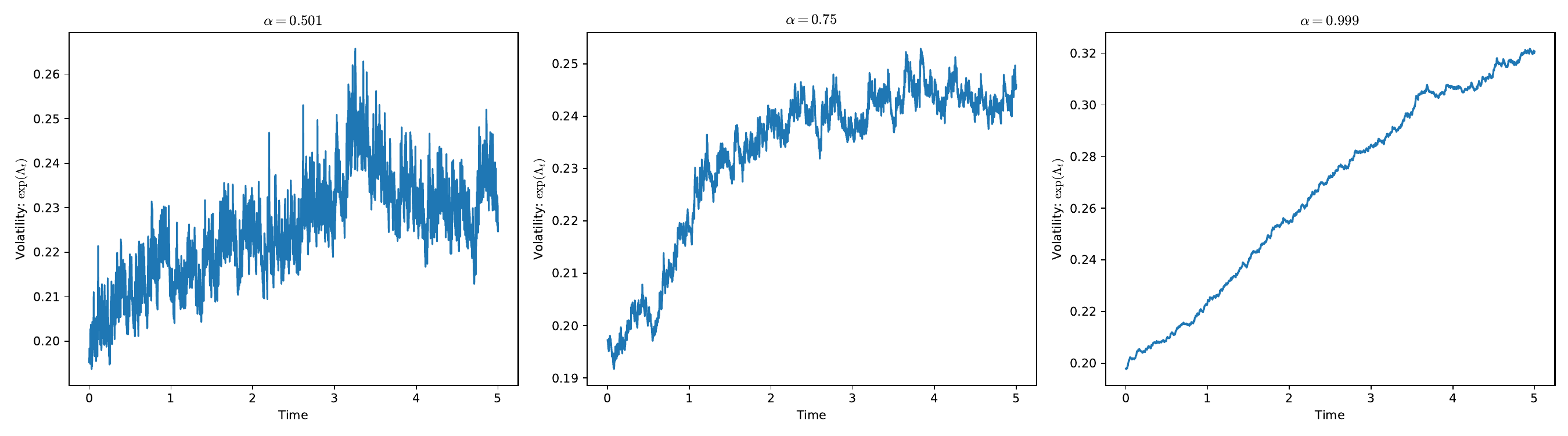}
    \caption{Volatility paths for different $\alpha$ values}
    \label{fig:volatility}
\end{figure}

It can be observed that as $\alpha$ increases, the volatility paths exhibit greater smoothness, and the mean-reverting behavior becomes more evident. In other words, a smaller $\alpha$ not only implies rougher paths but also corresponds to a slower mean-reversion, which is consistent with the observations reported in \cite{gatheral2018volatility}. 

We proceed to price a range of path-independent and path-dependent options, including European, Asian, Lookback, and Barrier options, using Monte Carlo simulations. The Algorithm \ref{keyalgorithm} outlines the computational steps for simulating asset paths and pricing these options.

\begin{algorithm}
\caption{Monte Carlo Option Pricing with EGARCH($\infty$) Approximation}
\label{keyalgorithm}
\begin{algorithmic}
    \State \textbf{Input:} Model parameters ($\alpha$, $\kappa$, $\theta$, $\mu$, $\rho$), initial value ($X_0$, $\Lambda_0$), number of simulated paths $M$, time steps per year $n$, time horizon $T$, strike price $K$, Up-In barrier $B_u$, Down-Out barrier $B_d$. 
    \State \textbf{Output:} Option prices and standard errors.
    \State \textbf{Initialization}: Compute discrete model parameters:  
    \[
    \phi_1 = 1 - \frac{1}{\Gamma(1-\alpha) \cdot 2^\alpha}, \quad \phi_i = \frac{1}{\Gamma(1-\alpha)} \left( \frac{1}{i^\alpha} - \frac{1}{(i+1)^\alpha} \right) \text{ for } i \geq 2.
    \]
    $\omega_n=\frac{\theta}{\mu\sqrt{n}}$, $a_n=1-\kappa n^{-\alpha}$, $\xi=\frac{\Lambda_0}{\theta}$. 
    \State \textbf{Key step}: Generate $M$ asset price paths $S_t=\ee^{X_t}$ by simulating Equation \eqref{qsdlong}. 
    \For{each path}
        \State Compute various option payoffs $V_i$:
            \begin{itemize}
                \item European: $\max(S_T - K, 0)$ for call, $\max(K - S_T, 0)$ for put
                \item Asian: $\max(\frac{1}{nT} \sum_{i=1}^{nT} S_{t_i}- K, 0)$ for call, $\max(K-\frac{1}{nT} \sum_{i=1}^{nT} S_{t_i}, 0)$ for put
                \item Lookback: $\max(\max_{i=0,\ldots,nT} S_{t_i} - K, 0)$ for call, $\max(K-\min_{i=0,\ldots,nT} S_{t_i}, 0)$ for put
                \item Barrier: $\max(S_T - K, 0)1_{\max S_{t_i}\geq B_u}$ for call, $\max(K - S_T, 0)1_{\min S_{t_i}> B_d}$ for put
            \end{itemize}
    \EndFor
    \State \textbf{return} \textit{option price estimates $V=\frac{1}{M}\sum_{i=1}^M V_i$, and standard errors $\sqrt{\frac{\sum_{i=1}^M (V_i-V)^2}{(M-1)M}}$.}
\end{algorithmic}
\end{algorithm}

Table \ref{pricing_result} reports the option prices computed via Algorithm \ref{keyalgorithm} for different values of the roughness parameter $\alpha$, with standard errors shown in parentheses. We set $M=5\times 10^5$, $n = 500$, $T = 1$, with strike prices $K = 80, 90, 100, 110, 120$. For the barrier options, the barrier level is set to $B_u = 110$ for the Up-In call and $B_d = 90$ for the Down-Out put. It can be seen that as $\alpha$ decreases, European calls and puts prices rise, reflecting the fat-tailed return distributions generated by rough volatility processes. Asian options follow similar patterns but with attenuated sensitivity, as price averaging dampens the impact of path roughness. Lookback options show the most pronounced positive sensitivity to roughness, as their payoffs directly depend on extreme price realizations that become more frequent under rough volatility. Barrier options exhibit the most intricate response. While barrier calls increase with roughness, corresponding barrier puts decrease in value. This occurs because roughness simultaneously increases potential terminal payoffs but also raises the probability of early barrier triggering. 

\begin{singlespace}
\begin{table}[htbp]
\centering
\caption{Option pricing results under the rough volatility model \eqref{newroughvol} for different $\alpha$}
\label{pricing_result}
\begin{threeparttable}
\begin{tabular}{c cccccccc}
\toprule
\multicolumn{9}{l}{\textbf{Panel A: $\alpha = 0.999$}} \\
\midrule
\multirow{2}{*}{Strike} & \multicolumn{2}{c}{European} & \multicolumn{2}{c}{Asian} & \multicolumn{2}{c}{Lookback} & \multicolumn{2}{c}{Barrier} \\
\cmidrule(r){2-3} \cmidrule(r){4-5} \cmidrule(r){6-7} \cmidrule(r){8-9}
& Call & Put & Call & Put & Call & Put & Call & Put \\
\midrule
80  & 21.3756 & 1.3852  & 20.0912 & 0.1055  & 36.9770 & 2.5257  & 18.4050 & 0.0000  \\
    & \scriptsize(0.0270) & \scriptsize(0.0058) & \scriptsize(0.0163) & \scriptsize(0.0011) & \scriptsize(0.0210) & \scriptsize(0.0074) & \scriptsize(0.0295) & \scriptsize(0.0000) \\
90  & 13.8728 & 3.8825  & 11.0856 & 1.0999  & 26.9770 & 7.1114  & 12.9005 & 0.0000  \\
    & \scriptsize(0.0236) & \scriptsize(0.0103) & \scriptsize(0.0145) & \scriptsize(0.0042) & \scriptsize(0.0210) & \scriptsize(0.0122) & \scriptsize(0.0243) & \scriptsize(0.0000) \\
100 & 8.2723  & 8.2820  & 4.6574  & 4.6717  & 16.9770 & 15.1612 & 8.1390  & 0.1835  \\
    & \scriptsize(0.0192) & \scriptsize(0.0154) & \scriptsize(0.0104) & \scriptsize(0.0090) & \scriptsize(0.0210) & \scriptsize(0.0150) & \scriptsize(0.0193) & \scriptsize(0.0014) \\
110 & 4.5555  & 14.5651 & 1.4513  & 11.4656 & 9.1915  & 25.1612 & 4.5555  & 1.1421  \\
    & \scriptsize(0.0146) & \scriptsize(0.0200) & \scriptsize(0.0059) & \scriptsize(0.0132) & \scriptsize(0.0183) & \scriptsize(0.0150) & \scriptsize(0.0146) & \scriptsize(0.0048) \\
120 & 2.3334  & 22.3430 & 0.3423  & 20.3566 & 4.6381  & 35.1612 & 2.3334  & 3.0491  \\
    & \scriptsize(0.0105) & \scriptsize(0.0237) & \scriptsize(0.0028) & \scriptsize(0.0155) & \scriptsize(0.0140) & \scriptsize(0.0150) & \scriptsize(0.0105) & \scriptsize(0.0094) \\
\midrule[0.85pt]
\multicolumn{9}{l}{\textbf{Panel B: $\alpha = 0.62$}} \\
\midrule
\multirow{2}{*}{Strike} & \multicolumn{2}{c}{European} & \multicolumn{2}{c}{Asian} & \multicolumn{2}{c}{Lookback} & \multicolumn{2}{c}{Barrier} \\
\cmidrule(r){2-3} \cmidrule(r){4-5} \cmidrule(r){6-7} \cmidrule(r){8-9}
& Call & Put & Call & Put & Call & Put & Call & Put \\
\midrule
80  & 21.4739 & 1.4614  & 20.1284 & 0.1205  & 37.2465 & 2.6685  & 18.6106 & 0.0000  \\
    & \scriptsize(0.0274) & \scriptsize(0.0060) & \scriptsize(0.0166) & \scriptsize(0.0012) & \scriptsize(0.0213) & \scriptsize(0.0077) & \scriptsize(0.0298) & \scriptsize(0.0000) \\
90  & 14.0136 & 4.0010  & 11.1613 & 1.1535  & 27.2465 & 7.3286  & 13.0776 & 0.0000  \\
    & \scriptsize(0.0240) & \scriptsize(0.0106) & \scriptsize(0.0147) & \scriptsize(0.0043) & \scriptsize(0.0213) & \scriptsize(0.0125) & \scriptsize(0.0246) & \scriptsize(0.0000) \\
100 & 8.4253  & 8.4128  & 4.7481  & 4.7402  & 17.2465 & 15.4061 & 8.2971  & 0.1731  \\
    & \scriptsize(0.0196) & \scriptsize(0.0156) & \scriptsize(0.0106) & \scriptsize(0.0091) & \scriptsize(0.0213) & \scriptsize(0.0152) & \scriptsize(0.0197) & \scriptsize(0.0013) \\
110 & 4.6899  & 14.6774 & 1.5125  & 11.5046 & 9.4320  & 25.4061 & 4.6899  & 1.0888  \\
    & \scriptsize(0.0150) & \scriptsize(0.0203) & \scriptsize(0.0061) & \scriptsize(0.0134) & \scriptsize(0.0187) & \scriptsize(0.0152) & \scriptsize(0.0150) & \scriptsize(0.0047) \\
120 & 2.4363  & 22.4237 & 0.3641  & 20.3562 & 4.8210  & 35.4061 & 2.4363  & 2.9263  \\
    & \scriptsize(0.0109) & \scriptsize(0.0240) & \scriptsize(0.0029) & \scriptsize(0.0157) & \scriptsize(0.0144) & \scriptsize(0.0152) & \scriptsize(0.0109) & \scriptsize(0.0092) \\
\midrule[0.85pt]
\multicolumn{9}{l}{\textbf{Panel C: $\alpha = 0.501$}} \\
\midrule
\multirow{2}{*}{Strike} & \multicolumn{2}{c}{European} & \multicolumn{2}{c}{Asian} & \multicolumn{2}{c}{Lookback} & \multicolumn{2}{c}{Barrier} \\
\cmidrule(r){2-3} \cmidrule(r){4-5} \cmidrule(r){6-7} \cmidrule(r){8-9}
& Call & Put & Call & Put & Call & Put & Call & Put \\
\midrule
80  & 21.4982 & 1.4699  & 20.1332 & 0.1252  & 37.2660 & 2.6897  & 18.6239 & 0.0000  \\
    & \scriptsize(0.0274) & \scriptsize(0.0060) & \scriptsize(0.0166) & \scriptsize(0.0012) & \scriptsize(0.0214) & \scriptsize(0.0077) & \scriptsize(0.0298) & \scriptsize(0.0000) \\
90  & 14.0469 & 4.0186  & 11.1787 & 1.1707  & 27.2660 & 7.3630  & 13.0986 & 0.0000  \\
    & \scriptsize(0.0240) & \scriptsize(0.0106) & \scriptsize(0.0147) & \scriptsize(0.0044) & \scriptsize(0.0214) & \scriptsize(0.0125) & \scriptsize(0.0246) & \scriptsize(0.0000) \\
100 & 8.4553  & 8.4269  & 4.7727  & 4.7647  & 17.2660 & 15.4459 & 8.3241  & 0.1699  \\
    & \scriptsize(0.0196) & \scriptsize(0.0157) & \scriptsize(0.0106) & \scriptsize(0.0092) & \scriptsize(0.0214) & \scriptsize(0.0152) & \scriptsize(0.0197) & \scriptsize(0.0013) \\
110 & 4.7082  & 14.6799 & 1.5238  & 11.5158 & 9.4538  & 25.4459 & 4.7082  & 1.0710  \\
    & \scriptsize(0.0150) & \scriptsize(0.0203) & \scriptsize(0.0061) & \scriptsize(0.0134) & \scriptsize(0.0188) & \scriptsize(0.0152) & \scriptsize(0.0150) & \scriptsize(0.0046) \\
120 & 2.4448  & 22.4165 & 0.3717  & 20.3636 & 4.8332  & 35.4459 & 2.4448  & 2.8889  \\
    & \scriptsize(0.0109) & \scriptsize(0.0240) & \scriptsize(0.0030) & \scriptsize(0.0158) & \scriptsize(0.0144) & \scriptsize(0.0152) & \scriptsize(0.0109) & \scriptsize(0.0091) \\
\bottomrule
\end{tabular}
\end{threeparttable}
\end{table}
\end{singlespace}
To further verify the validity of our results, we compute the pricing outcomes when $\alpha=1$. In this case, model \eqref{newroughvol} reduces to the classical lognormal stochastic volatility model proposed by \cite{scott1987option}:
\[\begin{split}
    &\dd X_t=-\frac{1}{2} \ee^{2\Lambda_t}\dd t+\ee^{\Lambda_t} \dd B_t,\\
    &\dd \Lambda_t=\kappa(\theta-\Lambda_t)\dd t+\sigma\dd W_t,\\
    &\text{Cov}(B_t, W_t)=\rho t,
\end{split}\]
where $\sigma=\sqrt{\rho^2+\zeta^2}\kappa\mu$. Using the same parameter values and Monte-Carlo settings as in the previous experiments, we report the option prices in Table \ref{pricing_result_alpha1}. The numbers in parentheses represent the percentage errors relative to the rough volatility model with $\alpha=0.999$. The errors are mostly within 0.5\%, with minor deviations attributable to the diminishing roughness. These results confirm that the option prices computed from our model are consistent with those from the classical specification. 
\begin{singlespace}
\begin{table}[htbp]
\centering
\caption{Option pricing results under the lognormal stochastic volatility model (case $\alpha=1$)}
\label{pricing_result_alpha1}
\begin{threeparttable}
\begin{tabular}{c cccccccc}
\toprule
\multirow{2}{*}{Strike} & \multicolumn{2}{c}{European} & \multicolumn{2}{c}{Asian} & \multicolumn{2}{c}{Lookback} & \multicolumn{2}{c}{Barrier} \\
\cmidrule(r){2-3} \cmidrule(r){4-5} \cmidrule(r){6-7} \cmidrule(r){8-9}
& Call & Put & Call & Put & Call & Put & Call & Put \\
\midrule
80  & 21.3770 & 1.4003 & 20.1121 & 0.1086 & 36.9692 & 2.5534 & 18.4349 & 0.0000 \\ 
    & \scriptsize(-0.01\%) & \scriptsize(-1.08\%) & \scriptsize(-0.10\%) & \scriptsize(-2.85\%) & \scriptsize(0.02\%) & \scriptsize(-1.08\%) & \scriptsize(-0.16\%) & \scriptsize(0.00\%) \\ 
90  & 13.8839 & 3.9071 & 11.1075 & 1.1040 & 26.9692 & 7.1480 & 12.9184 & 0.0000 \\ 
    & \scriptsize(-0.08\%) & \scriptsize(-0.63\%) & \scriptsize(-0.20\%) & \scriptsize(-0.37\%) & \scriptsize(0.03\%) & \scriptsize(-0.51\%) & \scriptsize(-0.14\%) & \scriptsize(0.00\%) \\ 
100  & 8.2834 & 8.3067 & 4.6752 & 4.6716 & 16.9692 & 15.1966 & 8.1519 & 0.1812 \\ 
    & \scriptsize(-0.13\%) & \scriptsize(-0.30\%) & \scriptsize(-0.38\%) & \scriptsize(0.00\%) & \scriptsize(0.05\%) & \scriptsize(-0.23\%) & \scriptsize(-0.16\%) & \scriptsize(1.27\%) \\ 
110  & 4.5550 & 14.5783 & 1.4563 & 11.4527 & 9.1719 & 25.1966 & 4.5550 & 1.1266 \\ 
    & \scriptsize(0.01\%) & \scriptsize(-0.09\%) & \scriptsize(-0.34\%) & \scriptsize(0.11\%) & \scriptsize(0.21\%) & \scriptsize(-0.14\%) & \scriptsize(0.01\%) & \scriptsize(1.38\%) \\ 
120  & 2.3233 & 22.3466 & 0.3396 & 20.3360 & 4.6129 & 35.1966 & 2.3233 & 3.0172 \\ 
    & \scriptsize(0.43\%) & \scriptsize(-0.02\%) & \scriptsize(0.80\%) & \scriptsize(0.10\%) & \scriptsize(0.55\%) & \scriptsize(-0.10\%) & \scriptsize(0.43\%) & \scriptsize(1.06\%) \\ 
\bottomrule
\end{tabular}
\end{threeparttable}
\end{table}
\end{singlespace}

We then examine the shape of the implied volatility surface generated by the model, focusing specifically on the at-the-money (ATM) volatility skew $\psi(\tau)$, defined as
\[\psi(\tau):=\left|\frac{\partial \sigma_{IV}(k,\tau)}{\partial k}\right|_{k=0},\]
where $k=\log(S_0/K)$ denotes the log-moneyness and $\tau$ represents the time to maturity. 

Empirical evidence suggests that the volatility skew exhibits a power-law decay as time to maturity increases \citep{gatheral2018volatility}. Theoretical work by \citet{fukasawa2011asymptotic} established that when volatility is driven by fractional Brownian motion with Hurst parameter $H$, the ATM skew asymptotically follows $\psi(\tau) \sim \tau^{H - 1/2}$ for small $\tau$. We compute the ATM volatility skew generated from our model, as shown in Figure \ref{fig:skew}. To assess the power-law relationship, we employ a double logarithmic scale and perform a linear regression on the log-transformed data. The results largely validate a power-law behavior ($R^2 > 0.85$), with two observations consistent with prior studies: 1. Rougher volatility paths generate steeper initial skews that subsequently decay more rapidly as $\tau$ increases. This aligns with the notion that greater path roughness implies a stronger local impact of volatility shocks, resulting in a more pronounced skew effect, especially for short maturities; 2. In the limit case where roughness is nearly absent ($\alpha = 0.999$), the volatility skew is observed to approximate a constant across maturities.

\begin{figure}[htbp]
    \centering
    \includegraphics[width=0.8\textwidth]{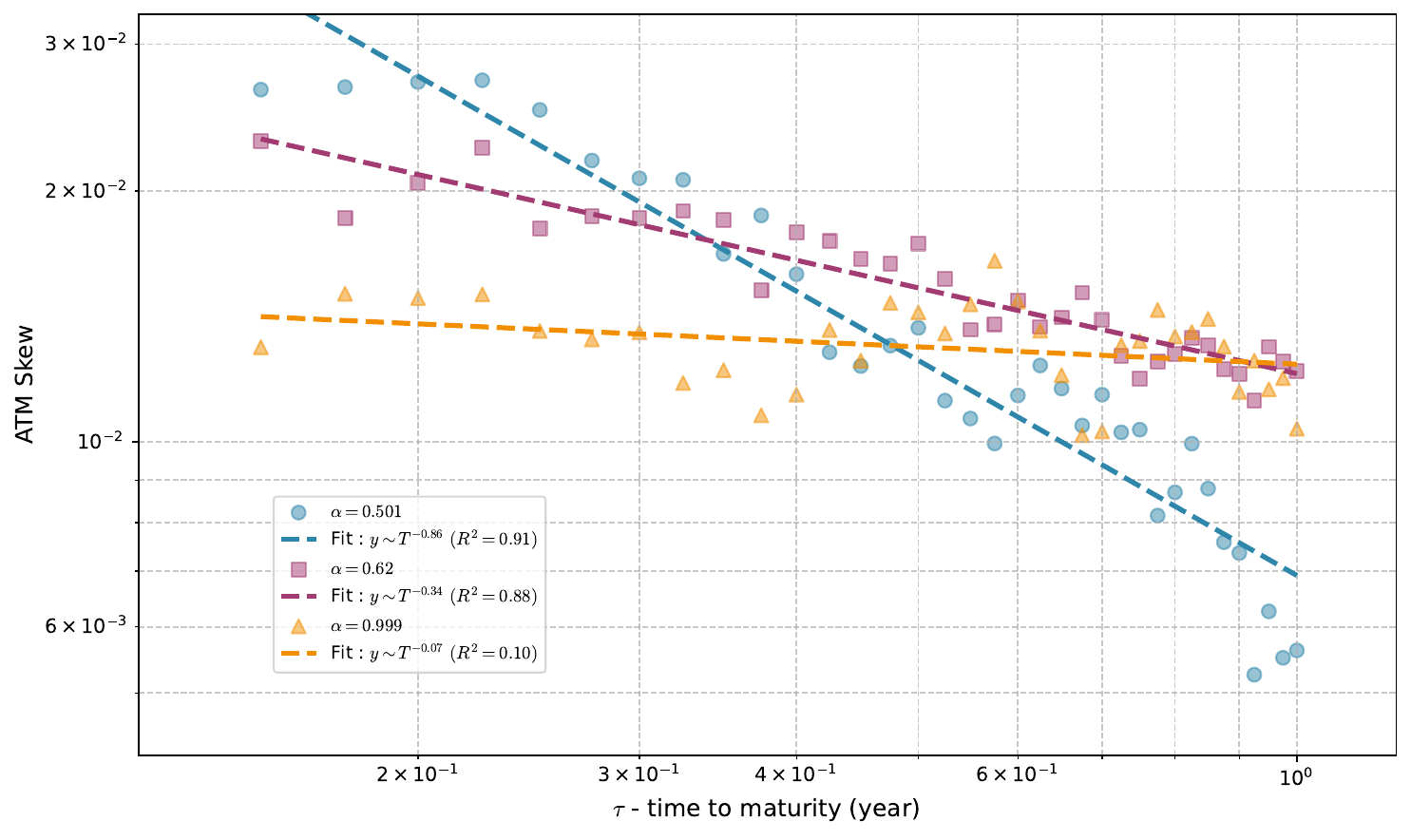}
    \caption{Power-law relationship between ATM volatility skew and time to maturity (Log-Log plot)}
    \label{fig:skew}
\end{figure}

Finally, we are concerned with the computational speed of this approximation. To accelerate the path generation process, we employ two techniques: parallel computing and FFT-based convolution. Parallel computing distributes the simulation of independent paths across multiple processors, achieving near-linear speedup. The FFT-based approach reformulates the convolution operation in Equation \eqref{qsdlong} as:
\[
\sum_{i=1}^{t} \phi_i^{(n)} \left[ \lambda_{t-i}^{(n)} + \rho \varepsilon_{t-i} + \zeta \left( |\varepsilon_{t-i}| - \sqrt{2/\pi} \right) \right]=\left(\Phi^{(n)}*\Psi^{(n)}\right)_t,
\]
where $\Phi^{(n)} := (\phi_i^{(n)})_{i=1,\ldots,nT}$ and $\Psi^{(n)} := \left( \lambda_{i}^{(n)} + \rho \varepsilon_{i} + \zeta \left( |\varepsilon_{i}| - \sqrt{2/\pi} \right) \right)_{i=0,\ldots,nT-1}$. By applying Fast Fourier Transform (FFT), the computational complexity is reduced from $\mathcal{O}((nT)^2)$ to $\mathcal{O}(nT \log nT)$, significantly improving efficiency for large $n$ and $T$. Table \ref{time} compares the average computational time for generating a single path with and without FFT acceleration, demonstrating substantial performance gains as $n$ increases.

\begin{table}[htbp]
\centering
\caption{Computation time with and without FFT acceleration (in seconds)}
\label{time}
\begin{tabular}{lcccccc}
\toprule
$n\ (T=1)$ & 100 & 250 & 500 & 1000 & 2500 & 5000 \\
\midrule
With FFT     & 0.0029  & 0.0073  & 0.0163  & 0.0383  & 0.2952  & 1.0789  \\
Without FFT  & 0.0104  & 0.0586  & 0.2333  & 0.9208  & 5.8149 & 23.1063 \\
\bottomrule
\end{tabular}
\end{table}

\section{Conclusion}\label{sec:conclusion}
This study reveals the connection between long memory score-driven models and the rough OU process, the former offering statistical robustness and the latter exhibiting rich dynamic behavior, especially in volatility. Specifically, we find that when $\alpha\in(1/2,1)$, the coefficients of the lag terms decay at a power-law rate of order $1+\alpha$, and the limiting process is driven by a fractional Brownian motion with Hurst parameter $H=\alpha-1/2$, which implies roughness. This result is inspired by the work of \cite{jaisson2016rough} on scaling limits of Hawkes processes. In our proof, however, we introduce a novel approach based on GFOs. Leveraging their continuity in Hölder spaces, we directly establish the convergence of the parameter process in the corresponding Hölder space, leading to a more direct argument.  

We apply our theoretical results to volatility modeling. When the time-varying parameter governed by the score-driven mechanism represents the log-volatility, the model converges to a new rough volatility model in which the log-volatility follows a rough OU process. More precisely, it is a process obtained by applying a Riemann-Liouville fractional operator of order $\alpha$ to an OU process, which differs from that in \cite{gatheral2018volatility}. 

In the numerical experiments, we approximate this rough volatility model using an EGARCH($\infty$) specification. As a quasi score-driven model,  it incorporates the score of asymmetric distributions can thus capture the leverage effect. In this sense, our results may be seen as a long memory extension of \cite{nelson1990arch}'s result for AR$(1)$ Exponential ARCH. The proposed Monte Carlo algorithm, optimized through FFT-based convolution and parallel computing, achieves significant computational efficiency, making it suitable for pricing diverse options, including European, Asian, Lookback, and Barrier options. 

The results of this paper can be further developed in future work. In fact, the derivation of the OU-type limiting process relies on the assumption that the second-order conditional moment of the score, denoted by $U$, is asymptotically constant. While this condition is not difficult to satisfy -- for instance, it always holds when the scaling function $S$ is chosen as the $-1/2$ power of the Fisher information -- we believe that the results can be extended to more general forms of $U$, yielding corresponding limits of the form \eqref{Lambda_sde2}. Such an extension would require new techniques, as the GFO approach used here would break down: the limit process in this case cannot be represented as a GFO applied to the associated diffusion.

\section*{Acknowledgements}
The authors express gratitude to Yingli Wang for insightful discussions and valuable suggestions, which significantly improved the quality of this work.

\appendix

\section{The proof in Section \ref{sec:main results}}
\label{A}

\subsection{Proof of Proposition \ref{same_solution}}\label{proof:same_solution}
We first introduce the notation and preliminary used throughout the proof, when the kernel is chosen as $g(x)=\frac{x^\alpha}{\Gamma(\alpha+1)}$ in \eqref{GFO_def}, the operator $\mathcal{G}^\alpha$ becomes the Riemann-Liouville fractional integral operator of order $\alpha$. For $\alpha\in(1/2,1)$, we denote $I^\alpha=\mathcal{G}^\alpha$ and $D^\alpha=\mathcal{G}^{-\alpha}$. When $\alpha=1$, $I^1$ and $D^1$ represent the standard integral and differential operators, respectively. Furthermore, we define $F^{\alpha,\kappa}(t) := \int_0^t f^{\alpha,\kappa}(s) \dd s$, the cumulative distribution function of the Mittag-Leffler density $f^{\alpha,\kappa}(t)$. Then, the following standard but important identity holds:
\begin{equation}\label{identity}
    I^{1-\alpha} f^{\alpha,\kappa}(t) = \kappa (1 - F^{\alpha,\kappa}(t)), 
\end{equation}
which can be readily verified by examining the Laplace transforms of both sides.

As Equation \eqref{roughou} is actually a linear stochastic Volterra equation, the existence and uniqueness of its strong solution are guaranteed by \cite{coutin2001stochastic}. It therefore suffices to show that $\Lambda_t$ satisfies this equation. We apply the fractional integral operator $I^{1-\alpha}$ to $\Lambda_t$. Let $K_t = I^{1-\alpha} \Lambda_t$. We decompose $K_t$ into its deterministic and stochastic components,
\[K_t = K_t^{\text{det}} + K_t^{\text{stoch}}=I^{1-\alpha}\left(\theta F^{\alpha,\kappa}(t) \right)+I^{1-\alpha}\left(\nu\gamma\int_0^t f^{\alpha,\kappa}(t-s)\dd W_s\right).\]

For the deterministic part, by using identity \eqref{identity} we obtain
\[\begin{split}
    K_t^{\text{det}} &= I^{1-\alpha} [\theta F^{\alpha,\kappa}](t) = \theta \cdot I^{1-\alpha} [I^1 f^{\alpha,\kappa}](t)=\theta \cdot I^1 [I^{1-\alpha} f^{\alpha,\kappa}](t)\\
    &=\theta \int_0^t \left( I^{1-\alpha} f^{\alpha,\kappa}(s) \right) \dd s = \kappa\theta \int_0^t (1 - F^{\alpha,\kappa}(s)) \dd s=\kappa\theta t-\kappa\theta\int_0^t F^{\alpha,\kappa}(s) \dd s.
\end{split}\]
Similarly, combining with the stochastic Fubini theorem yields the stochastic part as follow: 
\[\begin{split}
    K_t^{\text{stoch}} &= \kappa\nu\gamma \int_0^t (1 - F^{\alpha,\kappa}(t-s)) \dd W_s=\kappa\nu\gamma \left[W_t - \int_0^t \int_0^u f^{\alpha,\kappa}(u-s) \dd W_s \dd u\right]\\
    &=\kappa\nu\gamma \left[W_t - \int_0^t \frac{1}{\nu\gamma}\left(\Lambda_u-\theta F^{\alpha,\kappa}(u)\right)\dd u\right]=\kappa\nu\gamma W_t-\kappa\int_0^t\Lambda_s\dd s+\kappa\theta\int_0^t F^{\alpha,\kappa}(s) \dd s\\
\end{split}\]
Thus, we have
\[K_t=\kappa\theta t+\kappa\nu\gamma W_t-\kappa\int_0^t\Lambda_s\dd s= \kappa \int_0^t (\theta - \Lambda_s) \dd s + \kappa\nu\gamma W_t\]

Now, we apply the fractional derivative operator $D^{1-\alpha}$ to $K_t$ to recover the $\Lambda_t$. According to the property $D^{1-\alpha} I^1 = I^\alpha$ and the result on the action of $\mathcal{G}^{\alpha-1}$ on diffusion processes established in Proposition \ref{Gaction}, we obtain
\[\begin{split}
    \Lambda_t&=\kappa D^{1-\alpha}I^1(\theta - \Lambda)(t)+\kappa\nu\gamma D^{1-\alpha}W_t=\kappa I^{\alpha}(\theta - \Lambda)(t)+\kappa\nu\gamma D^{1-\alpha}W_t\\
    &=\frac{1}{\Gamma(\alpha)} \int_0^t (t-s)^{\alpha-1} \kappa(\theta - \Lambda_s) \dd s+\frac{\gamma\kappa\nu}{\Gamma(\alpha)}\int_0^t (t-s)^{\alpha-1}\dd W_s. \\
\end{split}\]
This is precisely Equation \eqref{roughou}, which completes the proof.

\subsection{Proof of Lemma \ref{philemma}}\label{proof:philemma}
Since $\lim_{x\to\infty}\phi_n(x)=0$, integration by parts yields
\[\begin{split}
    \hat{\phi}_n(u)=\int_0^\infty \ee^{-\ii u x}\phi_n(x) \dd x=\frac{1}{\ii u}\sum_{k=1}^\infty \ee^{-\ii u k}\phi_{nk}.
\end{split}\]
Since $\|\phi\|=1$, for any $|u|>1$, we have
\[|\hat{\phi}_n(u)|\leq \frac{1}{|u|}\sum_{k=1}^\infty \phi_{nk}\leq \frac{1}{|u|}\leq \frac{1}{|u|^\alpha}. \]
From \eqref{hatphi}, there exists $\delta < 1$ such that for all $|u| < \delta$, we have
\[|1-\hat{\phi}_n(u)|\geq \frac{K\Gamma(1-\alpha)}{2\alpha}\left|\frac{u}{n}\right|^\alpha.\]
Moreover, $\frac{|1-\hat{\phi}_n(u)|}{|u/n|^\alpha}$ is a continuous function on $[\delta, 1]$, and thus attains its minimum at some point $u_0\in[\delta, 1]$. Suppose $|1-\hat{\phi}_n(u_0)|=0$, that is $\text{Re}(\hat{\phi}_n(u_0))=\mb{E}[\cos u_0 U_1/n]=1$. This implies that $U_1\in\{2kn\pi/u_0, k=0,1,2,\ldots\}$ almost surely, but this is clearly impossible. Therefore, there exists $m=\frac{|1-\hat{\phi}(u_0)|}{|u_0/n|^\alpha}>0$ such that 
\[|1-\hat{\phi}_n(u)|\geq m\left|\frac{u}{n}\right|^\alpha.\]
Consequently, for $|u|\leq 1$, taking $c_1=\min\left\{\frac{K\Gamma(1-\alpha)}{2\alpha}, m\right\}$ yields the desired result. 

Similarly, since $|\hat{\phi}_n(u)| \leq |u|^{-\alpha}$, there exists $\delta'>1$ such that for all $|u|>\delta'$, we have $|1-\hat{\phi}_n(u)| \geq 1/2$. Moreover, $|1-\hat{\phi}_n(u)|$ is continuous on $[1, \delta']$, and thus attains its minimum at some point $u_1\in[1, \delta']$. Suppose $|1-\hat{\phi}_n(u_1)|=0$, that is $\text{Re}(\hat{\phi}_n(u_1))=\mb{E}[\cos u_1 U_1/n]=1$. This implies that $U_1\in\{2kn\pi/u_1, k=0,1,2,\ldots\}$ almost surely, but this is clearly impossible. Therefore, there exists $m'=|1-\hat{\phi}_n(u_1)|>0$ such that
\[|1-\hat{\phi}_n(u)|\geq m'.\]
Consequently, for $|u|> 1$, taking $c_2=\min\left\{1/2, m'\right\}$ yields the desired result.

\subsection{Proof of Proposition \ref{L2convergence}}\label{proof:L2convergence}
To establish the $L^2$-convergence, we employ the Fourier isometry
\[\|\rho_n-f^{\alpha,\kappa}\|_2=\frac{1}{2\pi}\|\hat{\rho}_n-\hat{f}^{\alpha,\kappa}\|_2.\]
which reduces the problem to verifying $L^2$-convergence of the corresponding characteristic functions. Given that $\widehat{\rho}_n \to \widehat{f}^{\alpha,\kappa}$ pointwise, it suffices to verify that the sequence $\widehat{\rho}_n$ satisfies the dominated convergence theorem.

According to estimates in Lemma \ref{philemma}, when $|u|\leq 1$, 
\[|\hat{\rho}_n(u)|=\left|\frac{(1-a_n)\hat{\phi}_n(u)}{1-a_n\hat{\phi}_n(u)}\right|\leq \frac{|1-a_n|}{|1-\hat{\phi}_n(u)|}\leq \frac{c_1}{|u|^\alpha}.\]
when $|u|>1$, 
\[|\hat{\rho}_n(u)|=\left|\frac{(1-a_n)\hat{\phi}_n(u)}{1-a_n\hat{\phi}_n(u)}\right|\leq \frac{|\hat{\phi}_n(u)|}{|1-\hat{\phi}(u/n)|}\leq \frac{1}{c_2 |u|^\alpha}.\]
Thus, there exists $C=\max\{c_1, 1/c_2\}$ such that for all $u\in\mb{R}$, we have
\[|\hat{\rho}_n(u)|\leq 1\wedge \frac{C}{|u|^\alpha},\]
and the right-hand side
\[\int_{\mb{R}}\left(1\wedge \frac{C}{|u|^\alpha}\right)^2\dd u=2\left(\int_{0}^{C^{-\alpha}} 1\dd u+ \int_{C^{-\alpha}}^\infty \frac{C^2}{u^{2\alpha}}\dd u\right)=2C^{-\alpha}+\frac{2C^{2\alpha-1}}{1-2\alpha}<\infty\]
is $L^2$-integrable. The result thus follows by applying the dominated convergence theorem.

\subsection{Proof of Proposition \ref{weak_converge}}\label{proof:weak_converge}
The process under consideration is defined as
\[Y_t^{(n)} = \theta t + \frac{\theta}{\sqrt{n}\omega_n} S_n(t),\]
where $S_n(t)$ represents the normalized sum of martingale differences:
\[S_n(t) = \frac{1}{\sqrt{n}} \left( \sum_{i=0}^{\lf nt \rf - 1} \Delta M^{(n)}_i + (nt - \lf nt \rf) \Delta M^{(n)}_{\lf nt \rf} \right).\]
We establish the convergence of $S_n$ in the H\"{o}lder space $\mathcal{C}^{\lambda}([0,1])$ to the Brownian motion $\gamma W$ by verifying the two standard components of a functional limit theorem: the convergence of finite-dimensional distributions and tightness.

\medskip
\noindent
\textit{Step 1: Convergence of finite-dimensional distributions.}

We first establish the weak convergence $S_n \Rightarrow \gamma W$ in the Skorokhod space $D([0,1])$, which implies the convergence of finite-dimensional distributions. To this end, we verify the conditions of classical FLCT for martingales \citep{hall2014martingale}. According to Assumption \ref{a1}, for all $t\in[0,1]$, the predictable quadratic variation process, as $n\to\infty$, 
\[V_n(t):=\frac{1}{n}\sum_{k=0}^{\lf nt\rf-1}\mb{E}[(\Delta M_i^{(n)})^2|\mathcal{F}^{(n)}_{i-1}]=\frac{1}{n}\sum_{k=0}^{\lf nt \rf-1}U^2(\lambda_i^{(n)})\to\gamma^2 t\ \text{in probability}.\]
    When this condition is satisfied, the Lindeberg condition is equivalent to proving that $\max_{0\leq k\leq \lf nt\rf-1}\frac{1}{\sqrt{n}}|\Delta M_i^{(n)}|\to0$ in probability. For any $\varepsilon>0$, applying the Markov inequality and Assumption \ref{a3} yields
\[\begin{split}
    \mb{P}(\max_{0\leq k\leq \lf nt\rf-1} \frac{1}{\sqrt{n}}|\Delta M_i^{(n)}|>\varepsilon)\leq \frac{1}{n^2\varepsilon^4}\sum_{k=0}^{\lf nt\rf-1}\mb{E}|\Delta M_i^{(n)}|^4\leq C_{\varepsilon}\frac{\lf nt\rf}{n^2}\to0. 
\end{split}\]
The validity of these conditions implies that $S_n \Rightarrow \gamma W$ in $D([0,1])$. Consequently, the finite-dimensional distributions of $S_n$ converge to those of $\gamma W$.

\medskip
\noindent
\textit{Step 2: Tightness in $\mathcal{C}^{\lambda}([0,1])$.}

According to Theorem 6 in \cite{ravckauskas2007holderian}, tightness in the H\"{o}lder space requires two conditions: (i) tightness of the sequence of random variables $\{S_n(t)\}$ for each dyadic $t \in [0,1]$, and (ii) the uniform decay of the dyadic projections. The convergence of finite-dimensional distributions established in Step 1 ensures that the first condition holds. Therefore, it suffices to verify the second condition regarding the Schauder coefficients: for any $\varepsilon > 0$,
\begin{equation}\label{tightness_criterion}
    \lim_{J \to \infty} \limsup_{n \to \infty} \mathbb{P}\Big( \sup_{j \geq J} 2^{\lambda j} \max_{1 \leq k \leq 2^j} |\delta_{j,k}(S_n)| > \varepsilon \Big) = 0,
\end{equation}
where $\delta_{j,k}(S_n)$ denotes the Schauder coefficients of $S_n$, defined by
\[
\begin{split}
        \delta_{j,k}(S_n) &= S_n(k/2^j) - \frac{S_n((k-1)/2^j) + S_n((k+1)/2^j)}{2} \\
        &= -\frac{1}{2} \left( \Delta S_n(k/2^j) - \Delta S_n((k-1)/2^j) \right),
    \end{split}
\]
with $\Delta S_n(t) := S_n(t+2^{-j}) - S_n(t)$. Elementary bounds yield
    \[\left|\delta_{j,k}(S_n)\right|\leq \left|\Delta S_n\left(\frac{k}{2^j}\right)\right|+\left|\Delta S_n\left(\frac{k-1}{2^j}\right)\right|\leq n^{-1/2}\sum_{i=\lf (k-1)n/2^j\rf}^{\lf (k+1)n/2^j\rf}\left|\Delta M_i^{(n)}\right|=:\Delta_{n,j,k}.\]
    When $n<2^j$, sharper bounds apply: if the interval $[(k-1)n/2^j, (k+1)n/2^j]$ contains no integer points, $\delta_{j,k}(S_n)=0$; otherwise,
    \begin{equation}\label{sharp_bound}
        \left|\delta_{j,k}(S_n)\right| \leq \frac{n^{1/2}}{2^{j-1}}\left (\left|\Delta M_{\lf kn/2^j\rf}^{(n)}\right|\vee \left|\Delta M_{\lf (k-1)n/2^j\rf}^{(n)}\right|\right )=: \Delta_{n,j,k}^*
    \end{equation}
    and at most $2n$ such intervals contain integer points. 

We split the probability estimate in \eqref{tightness_criterion} at the critical scale $j \approx \log_2 n$. For $p > 2$,
\begin{equation}\label{tight_prob}
    \begin{split}
        &\mb{P}\left( \sup_{j \geq J} 2^{\lambda j} \max_{1 \leq k \leq 2^j} |\delta_{j,k}(S_n)| > \varepsilon \right) \\
        &\leq \sum_{j \geq J} \sum_{1 \leq k \leq 2^j} \mb{P}\left( |\delta_{j,k}(S_n)| > \frac{\varepsilon}{2^{\lambda j}} \right) \\
        &\leq \sum_{J \leq j \leq \lf \log_2 n \rf} \sum_{1 \leq k \leq 2^j} \frac{\mb{E}|\Delta_{n,j,k}|^p}{(\varepsilon 2^{-\lambda j})^p} + 2n \sum_{j > \lf \log_2 n \rf} \frac{\mb{E}|\Delta_{n,j,k}^*|^p}{(\varepsilon 2^{-\lambda j})^p}.
    \end{split}
\end{equation}
Thus the key estimate relies on controlling the $p$-moments of $\Delta_{n,j,k}$ and $\Delta_{n,j,k}^*$. Let $I_{n,j,k}=[\lf (k-1)n/2^j\rf, \lf (k+1)n/2^j\rf]\cap\mb{Z}$. 
    
For the first part, by Rosenthal's inequality, there exists a constant $C$ such that
    \[\begin{split}
        \mb{E}\left|\Delta_{n,j,k}\right|^p&\leq C\left[\mb{E}\left(\sum_{i\in I_{n,j,k}}\mb{E}[(n^{-1/2}\Delta M_i^{(n)})^2|\mathcal{F}^{(n)}_{i-1}]\right)^{p/2}+\sum_{i\in I_{n,j,k}}\mb{E}\left|n^{-1/2}\Delta M_i^{(n)}\right|^p\right]\\
        &=C\left[\mb{E}\left(n^{-1}\sum_{i\in I_{n,j,k}}U^2(\lambda_i^{(n)})\right)^{p/2}+n^{-p/2}\sum_{i\in I_{n,j,k}}\mb{E}\left|\Delta M_i^{(n)}\right|^p\right]\\
        &\leq C\left[C_1 \left(n^{-1} |I_{n,j,k}|\right)^{p/2}+C_p n^{-p/2} |I_{n,j,k}|^p\right]\quad (\text{by Assumption \ref{a1} and \ref{a3}})\\
        &\leq C'\left(n^{-1} |I_{n,j,k}|\right)^{p/2}.
    \end{split}
    \]
Since $|I_{n,j,k}|=\lf (k+1)n/2^j\rf-\lf (k-1)n/2^j\rf+1\leq \frac{2n}{2^j}+2$, when $j\leq \lf\log_2 n\rf$, we have 
    \[\mb{E}\left|\Delta_{n,j,k}\right|^p\leq C \left(n^{-1} |I_{n,j,k}|\right)^{p/2}\leq 2^p C 2^{-jp/2}.\]
Consequently, the first summation in \eqref{tight_prob} satisfies
    \[\sum_{J\leq j\leq \lf\log_2 n\rf}\sum_{1\leq k\leq 2^j}\frac{\mb{E}\left|\Delta_{n,j,k}\right|^p}{(\varepsilon 2^{-\lambda j})^p}\leq \frac{2^p C}{\varepsilon^p}\sum_{J\leq j\leq \lf\log_2 n\rf} 2^{(1+\lambda p-p/2)j}.\]
For the second part, using \eqref{sharp_bound}, we have
    \[2n \sum_{j> \lf\log_2 n\rf}\frac{\mb{E}|\Delta_{n,j,k}^*|^p}{(\varepsilon 2^{-\lambda j})^p}\leq \frac{4C_p}{\varepsilon^p}n^{1+p/2} \sum_{j> \lf\log_2 n\rf}2^{(\lambda -1)pj}.\]
Since $\lambda<1$, the geometric series $\sum_{j> \lf\log_2 n\rf}2^{(\lambda -1)pj}$ converges and is of order $\mathcal{O}(n^{(\lambda-1)p})$. Therefore, the second summation is $\mathcal{O}(n^{1+\lambda p-p/2})$, which vanishes as $n\to\infty$, provided $\lambda<\frac{p-2}{2p}$. 

Combining these estimates yields
    \[\limsup_{n\to\infty}\mb{P}\left(\sup_{j\geq J} 2^{\lambda j}\max_{1\leq k\leq 2^j}\left|\delta_{j,k}(S_n)\right|>\varepsilon\right)\leq \frac{2^p C}{\varepsilon^p}\frac{2^{(1+\lambda p-p/2)J}}{1-2^{1+\lambda p-p/2}}\to 0, \quad \text{as} \ J\to\infty.\]
Since $p>2$ can be taken arbitrarily large, tightness holds for all $\lambda<1/2$.

\subsection{Proof of Proposition \ref{indistinguishable}}\label{proof:indistinguishable}
    Comparing \eqref{Lambda_main} with \eqref{tlidelambda}, we have 
    \[\begin{split}
        &\left|\Lambda_t^{(n)}-\widetilde{\Lambda}_t^{(n)}\right|\\
        &\leq \left|(1-a_n)\theta\right|+\theta\left|\sum_{i=1}^{\lf nt\rf}(1-a_n)\psi_{i}^{(n)}-\int_0^t f^{\alpha,\kappa}(u)\dd u\right|+\frac{\theta}{\omega_n}\left|\Delta M_{\lf nt\rf}^{(n)}\int_{t_{\lf nt\rf}}^{t}f^{\alpha,\kappa}(t-s)\dd s\right|\\
        &+\frac{\theta}{n\omega_n}\left|\sum_{i=0}^{\lf nt\rf-1}\left[n(1-a_n)\psi_{\lf nt\rf-i}^{(n)}-n\int_{t_i}^{t_{i+1}}f^{\alpha,\kappa}(t-s)\dd s\right]\Delta M_{i}^{(n)}\right|=I_1+I_2+I_3+I_4. 
    \end{split}\]
    It is evident that $I_1\to 0$. By \eqref{uniform} and Dini's theorem, $I_2$ converges to zero uniformly. As for $I_3$, this term handles the boundary effect of the discretization interval. We first estimate the integral, 
    \[
    \left| \int_{t_{\lf nt\rf}}^{t} f^{\alpha,\kappa}(t-s) \dd s \right| \leq \int_{0}^{1/n} C s^{\alpha-1} \dd s = \frac{C}{\alpha} \left(\frac{1}{n}\right)^\alpha.
    \]
    Thus, 
    \[\mb{E}|\sup_{t\in[0,1]}I_3|\leq \frac{\theta}{\omega_n} \mb{E}[\max_{k\leq k_n} |\Delta M_k^{(n)}|]\cdot \frac{C}{\alpha} n^{-\alpha}\lesssim n^{1/2-\alpha}\to 0.\]
    Finally, we focus on $I_4$, the most critical component, which captures the discrepancy between the stepwise kernel and the average density of $f^{\alpha,\kappa}$ over small intervals. Noting that $n(1-a_n)\psi_{\lf nt\rf}^{(n)}/\rho_n(t)=a_n\to 1$, we will henceforth replace $n(1-a_n)\psi_{\lf nt\rf-i}^{(n)}$ with $\rho_n(t-t_i)$. Using a right-endpoint approximation, we decompose 
    \[\begin{split}
        G_n(t)&:=\rho_n(t) - n\int_{t}^{t+\frac{1}{n}}f^{\alpha,\kappa}(s)\dd s\\
        &=\underbrace{\left(\rho_n(t) - f^{\alpha,\kappa}(t)\right)}_{K_n}+\underbrace{\left(f^{\alpha,\kappa}(t) - n\int_{t-\frac{1}{n}}^{t}f^{\alpha,\kappa}(s)\dd s\right)}_{D_n}.\\
    \end{split}\]
    By Proposition \ref{L2convergence}, $\|K_n\|_2\to 0$, and moreover $\|K_n\|_2\lesssim n^{1/2-\alpha}$ according to \cite{wang2025rough}. Since $f^{\alpha,\kappa}(t)\sim t^{\alpha-1}$ near zero, standard regularity arguments imply that $\|D_n\|_2\to0 $, with the same rate $n^{1/2-\alpha}$. Hence Minkowski's inequality gives
    \[\|G_n\|_2\leq \|K_n\|_2+\|D_n\|_2\lesssim n^{1/2-\alpha}.\]

    Denote $C_n=\frac{\theta}{n\omega_n}\sim n^{-1/2}$ and $G_{i,n}= G(t_i)$. We define the empirical $L^2$-norm of $(G_{n,i})_{i\geq 0}$ is actually the discrete version of $\|G_n\|_2$: 
    \[\|G\|_{n,2}:=\left(\frac{1}{n}\sum_{i=0}^{n-1} G_{i,n}^2\right)^{1/2}\sim \|G_n\|_2\lesssim n^{1/2-\alpha}.\]
    Thus, we can represent $I_4$ through
    \[I_4=C_n\left|\sum_{i=1}^{\lf nt\rf}G_{n,i}\Delta M_{\lf nt\rf-i}^{(n)}\right|=:|\mathfrak{G}_t^{(n)}|.\]
    The process $\mathfrak{G}^{(n)}$ is a weighted sum of martingale difference but not not itself a martingale, so classical maximal inequalities do not apply directly. Inspired by the factorization method for stochastic convolutions introduced in \cite{da2014stochastic}, we employ a discrete counterpart leveraging the semigroup property of fractional difference operators. Let $\delta \in (1/2, \alpha)$ be a fixed parameter. We explicitly define the smoothing sequence $a = (a_k)_{k\geq 0}$ via the generalized binomial coefficients corresponding to the fractional order $\alpha-\delta$:
    \[ 
    a_k := (-1)^k \binom{-(\alpha-\delta)}{k} = \frac{\Gamma(k+\alpha-\delta)}{\Gamma(k+1)\Gamma(\alpha-\delta)} \sim \frac{1}{\Gamma(\alpha-\delta)} k^{\alpha-\delta-1}, \quad \text{as } k \to \infty. 
    \]
    Given the error kernel $G$, we define the sequence $b=(b_{k,n})_{k\geq 1}$ as the unique solution to the discrete convolution equation $G=a*b$. The existence and uniqueness of $b$ are guaranteed by the invertibility of the lower-triangular Toeplitz matrix since $a_0=1$. To control the magnitude of $b$, consider $c$ as the convolution inverse of $a$, with generating function
    \[C(z) = \frac{1}{A(z)} = \frac{1}{(1-z)^{-(\alpha-\delta)}} = (1-z)^{\alpha-\delta}, \]
    so that $c_k = \binom{\alpha-\delta}{k}\sim k^{-(\alpha-\delta)-1}$. Since $\alpha > \delta$, which implies $c \in l^1$. By the discrete Young's inequality, the empirical $L^2$-norm of $b$ satisfies:
    \[ 
    \|b\|_{n,2} := \left( \frac{1}{n} \sum_{i=1}^n b_{i,n}^2 \right)^{1/2} \le \|c\|_{l^1} \|G\|_{n,2} \lesssim n^{1/2-\alpha}. 
    \]
    
    Using the factorization $G_{k,n} = \sum_{j=1}^k a_{k-j}b_{j,n}$ and exchanging the order of summation, we obtain:
    \[
    \mathfrak{G}_t^{(n)} = C_n \sum_{j=1}^{\lf nt \rf} \left( \sum_{i=1}^j a_{j-i} b_{i,n} \right) \Delta M_{\lf nt \rf - j}^{(n)} = \sum_{j=0}^{\lf nt \rf - 1} a_j \mathcal{Y}_{\lf nt \rf - j}, 
    \]
    where the auxiliary process $\mathcal{Y}$ is defined as $\mathcal{Y}_k := \sum_{i=1}^k b_{i,n} C_n \Delta M_{k-i}^{(n)}$. Applying Hölder's inequality with $\frac{1}{p}+\frac{1}{q}=1$, we have
    \[
    \sup_{t\in[0,1]}|\mathfrak{G}_t^{(n)}| \leq \left(\sum_{k=0}^{n-1} |a_{k}|^q\right)^{1/q} \cdot \left(\sum_{k=0}^{n-1} |\mathcal{Y}_{k}|^p\right)^{1/p}.
    \]
    We choose $q>\frac{1}{1-(\alpha-\delta)}$, ensuring $\left(\sum_{k=0}^{n-1} |a_{k}|^q\right)^{1/q}\leq\|a\|_{l^q} < \infty$. It remains to analyze the term involving $\mathcal{Y}$, which is still not a martingale. But for a fixed $k$, $\mathcal{Y}_k$ can be regarded as the terminal value of a discrete martingale with respect to the index $i$. Using the Burkholder-Davis-Gundy (BDG) inequality, we obtain
    \[
    \mb{E}|\mathcal{Y}_{k}|^p \leq C_p \mb{E}\left(\sum_{i=1}^k |b_{i,n}|^2 C_n^2 |\Delta M_{k-i}^{(n)}|^2\right)^{p/2}.
    \]
    Noting that $C_n^2 \sim n^{-1}$, by Minkowski's inequality and the Assumption \ref{a3}, we deduce
    \[
    \left\|\sum_{i=1}^k |b_{i,n}|^2 C_n^2 |\Delta M_{k-i}^{(n)}|^2\right\|_{L^{p/2}} \le \sum_{i=1}^k |b_{i,n}|^2 C_n^2 \|\Delta M_{k-i}^{(n)}\|_{L^p}^2 \lesssim \frac{1}{n} \sum_{i=1}^k |b_{i,n}|^2.
    \]
    Consequently, utilizing the bound $\|b\|_{n,2} \lesssim n^{1/2-\alpha}$, we estimate the total $p$-th moment:
    \[
    \mb{E}\sum_{k=0}^{n-1} |\mathcal{Y}_{k}|^p \lesssim \sum_{k=0}^{n-1} \left( \frac{1}{n} \sum_{i=1}^k |b_{i,n}|^2 \right)^{p/2} \le n \left( \|b\|_{n,2}^2 \right)^{p/2} \lesssim n \cdot (n^{1/2-\alpha})^p = n^{1+p(1/2-\alpha)}.
    \]
    For convergence, we require the exponent $1 + p(1/2-\alpha)<0$, which implies $p > \frac{1}{\alpha-1/2}$. On the other hand, the constraint for $q>\frac{1}{1-(\alpha-\delta)}$ implies $ p < \frac{1}{\alpha-\delta}$. Since $\delta>1/2$, a feasible $p$ exists. For such $p$, $\mb{E}\sum_{k=0}^{n-1} |\mathcal{Y}_{k}|^p \to 0$, which implies $\left(\sum_{k=0}^{n-1} |\mathcal{Y}_{k}|^p\right)^{1/p} \to 0$ in probability. Finally, we conclude
    \[
    \sup_{t\in[0,1]}|\mathfrak{G}_t^{(n)}| \leq \|a\|_{l^q} \cdot \left(\sum_{k=0}^{n-1} |\mathcal{Y}_{k}|^p\right)^{1/p} \to 0 \quad \text{in probability}.
    \]
    In summary, $\Lambda_t^{(n)}$ and $\widetilde{\Lambda}_t^{(n)}$ are asymptotically indistinguishable. 

\section{The proof in Section \ref{sec:examples}}\label{B}
\subsection{Proof of Theorem \ref{example_theorem}}\label{proof:example_theorem}
The proof necessitates establishing the joint weak convergence of the return process and the volatility process. We firstly verify the Theorem \ref{mainthm} so that the $\Lambda^{(n)}$ converges. Denote $\mb{E}_{k-1}[\cdot]:=\mb{E}[\cdot|\mathcal{F}_{k-1}]$. By the properties of GED, for $p\geq 2$, the moments of the innovation term $\varepsilon_k$ satisfy:
\[\mb{E}_{k-1}[|\varepsilon_k|^p]=\frac{2^{p/\nu}}{\nu\Gamma(1+1/\nu)}\Gamma\left(\frac{p+1}{\nu}\right)<\infty.\]
Recall that the score function is defined as $\nabla_k=\frac{\nu}{2}|\varepsilon_k|^\nu-1$. We have, 
\[\mb{E}[|\nabla_{k}|^p]\leq 2^{p-1}\left[\left(\frac{\nu}{2}\right)^p\mb{E}|\varepsilon_k|^{\nu p}+1\right]<\infty. \]
In particular, the second moment is given by
\[U^2(\lambda_k)=\mb{E}_{k-1}[\nabla_{k}^2]=\frac{\nu^2}{4}\mb{E}_{k-1}[|\varepsilon|^{2\nu}]-\nu\mb{E}_{k-1}[|\varepsilon|^{\nu}]+1=\nu.\]
The dynamic of $\Lambda_t$ then follows from Theorem \ref{mainthm}. 
    
Then we establish thge joint convergence of driving Noises. Consider the vector of partial sums representing the driving noises for the returns and the volatility parameter:
\[\mathbf{M}^{(n)}_t = \left(B^{(n)}_t, W^{(n)}_t\right) := \left(\sum_{i=1}^{\lf nt\rf} \frac{\varepsilon_i}{\sqrt{\chi n}}, \sum_{i=1}^{\lf nt\rf} \frac{\nabla_i}{\sqrt{\nu n}}\right).\]
Since the sequence $\{\varepsilon_i\}$ is i.i.d., we apply the multidimensional Donsker's invariance principle. Note that $\mb{E}_{n-1}(\varepsilon_n)=0$ and by the properties of score implies $\mb{E}_{n-1}(\nabla_n)=0$ and $\mb{E}_{n-1}(\varepsilon_n \nabla_n)=0$. Consequently, $\mathbf{M}^{(n)}$ converges weakly in the Skorohod space $\mathcal{D}([0,1], \mb{R}^2)$ to a two-dimensional Brownian motion $(B, W)$ with an identity covariance matrix. Furthermore, since $\Lambda^{(n)}$ is adapted to the filtration generated by $\mathbf{M}^{(n)}$, the joint convergence holds:
\begin{equation}\label{lambda_B}
    (\Lambda^{(n)}, B^{(n)}) \Rightarrow (\Lambda, B) \quad \text{in } \mathcal{D}([0,1], \mb{R}^2). 
\end{equation}

Finally, we address the convergence of $X^{(n)}$. By the continuous mapping theorem, since $\Lambda^{(n)} \Rightarrow \Lambda$, we have $H^{(n)}=\ee^{\Lambda^{(n)}}\Rightarrow e^{\Lambda}=: H$. We now invoke the convergence theorem for stochastic integrals (see, , e.g., \cite{kurtz1991weak}). The conditions required are: (i) The joint convergence of the integrand and integrator, which is already satisfied in \eqref{lambda_B}; and (ii) The uniform tightness of integrators, which is equivalent to verify the quadratic variation is stochastically bounded. 
    
We observe that
\[\mb{E}([B^{(n)}, B^{(n)}]_t)=\frac{1}{n}\sum_{i=1}^{\lf nt\rf}\mb{E}(\varepsilon_i^2/\chi)\leq t.\]
Therefore, by Markov's inequality, for any $K>0$, 
\[\mb{P}([B^{(n)}, B^{(n)}]_t>K)\leq \frac{\mb{E}([B^{(n)}, B^{(n)}]_t)}{K}\leq \frac{t}{K}\to 0, \quad \text{as}\ K\to\infty.\]
This confirms the uniform tightness. Thus, the sequence of stochastic integrals converges weakly:
\[X^{(n)}_t = \int_0^t H^{(n)}_{s-} \dd B^{(n)}_s \Rightarrow \int_0^t \ee^{\Lambda_s} \dd B_s=X_t\]
in the Skorohod topology. Combining this with the marginal convergence of $\Lambda$, we conclude that the pair $(X^{(n)}, \Lambda^{(n)})$ converges weakly to $(X, \Lambda)$ as stated.

\bibliographystyle{elsarticle-harv} 
\bibliography{ref}

\end{document}